\def\RSsubtxt{section~}\newref{sub}{name = \RSsubtxt}}
\def\RSthmtxt{theorem~}\newref{thm}{name = \RSthmtxt}}
\def\RSlemtxt{lemma~}\newref{lem}{name = \RSlemtxt}}
\numberwithin{equation}{section}
\numberwithin{figure}{section}
\numberwithin{table}{section}
\theoremstyle{plain}
\newtheorem{thm}{\protect\theoremname}[section]
  \theoremstyle{definition}
  \newtheorem{defn}[thm]{\protect\definitionname}
  \theoremstyle{remark}
  \newtheorem{note}[thm]{\protect\notename}
  \theoremstyle{plain}
  \newtheorem{lem}[thm]{\protect\lemmaname}
  \theoremstyle{plain}
  \newtheorem{cor}[thm]{\protect\corollaryname}
  \theoremstyle{remark}
  \newtheorem{rem}[thm]{\protect\remarkname}
  \theoremstyle{definition}
  \newtheorem{example}[thm]{\protect\examplename}
  \theoremstyle{plain}
  \newtheorem*{assumption*}{\protect\assumptionname}
  \theoremstyle{remark}
  \newtheorem*{acknowledgement*}{\protect\acknowledgementname}
\providecommand{\MR}[1]{}
  \providecommand{\acknowledgementname}{Acknowledgement}
  \providecommand{\assumptionname}{Assumption}
  \providecommand{\corollaryname}{Corollary}
  \providecommand{\definitionname}{Definition}
  \providecommand{\examplename}{Example}
  \providecommand{\lemmaname}{Lemma}
  \providecommand{\notename}{Note}
  \providecommand{\remarkname}{Remark}
\providecommand{\theoremname}{Theorem}
\begin{document}

\title{Transfer Operators, Induced Probability Spaces, and Random Walk Models}

\author{Palle Jorgensen and Feng Tian}

\address{(Palle E.T. Jorgensen) Department of Mathematics, The University
of Iowa, Iowa City, IA 52242-1419, U.S.A. }

\email{palle-jorgensen@uiowa.edu}

\urladdr{http://www.math.uiowa.edu/\textasciitilde{}jorgen/}

\address{(Feng Tian) Department of Mathematics, Hampton University, Hampton,
VA 23668, U.S.A.}

\email{feng.tian@hamptonu.edu}

\subjclass[2000]{Primary 47L60, 46N30, 65R10, 58J65, 81S25.}

\keywords{Unbounded operator, closable operator, spectral theory, discrete
analysis, distribution of point-masses, probability space, stochastic
processes, discrete time, path-space measure, endomorphism, harmonic
functions.}

\maketitle
\pagestyle{myheadings}
\markright{}
\begin{abstract}
We study a family of discrete-time random-walk models. The starting
point is a fixed generalized transfer operator $R$ subject to a set
of axioms, and a given endomorphism in a compact Hausdorff space $X$.
Our setup includes a host of models from applied dynamical systems,
and it leads to general path-space probability realizations of the
initial transfer operator. The analytic data in our construction is
a pair $\left(h,\lambda\right)$, where $h$ is an $R$-harmonic function
on $X$, and $\lambda$ is a given positive measure on $X$ subject
to a certain invariance condition defined from $R$. With this we
show that there are then discrete-time random-walk realizations in
explicit path-space models; each associated to a probability measures
$\mathbb{P}$ on path-space, in such a way that the initial data allows
for spectral characterization: The initial endomorphism in $X$ lifts
to an automorphism in path-space with the probability measure $\mathbb{P}$
quasi-invariant with respect to a shift automorphism. The latter takes
the form of explicit multi-resolutions in $L^{2}$ of $\mathbb{P}$
in the sense of Lax-Phillips scattering theory.
\end{abstract}

\tableofcontents{}

\section{Introduction}

We study a family of stochastic processes indexed by a discrete time
index. Our results encompass the more traditional random walk models,
but our study here goes beyond that. The processes considered are
generated by a single positive operator, say $R$, defined on $C\left(X\right)$
where $X$ is a given compact Hausdorff space. From a given positive
operator $R$ we then derive an associated system of generalized transition
probabilities, and an induced probability space; the induction realized
as a probability space of infinite paths having $X$ as a base space.
In order for us to pin down the probability space, i.e., the induced
path-space measure $\mathbb{P}$, two more ingredients will be needed,
one is a prescribed endomorphism $\sigma$ in $X$, consistent with
$R$; and, the other, is a generalized harmonic function $h$ on $X$,
i.e., $R\left(h\right)=h$. We further explore the interplay between
the harmonic functions $h$ and the associated path-space measures
$\mathbb{P}$. To do this we note that the given endomorphism $\sigma$
in $X$ induces an automorphism in the path-space. We show that the
path-space measure $\mathbb{P}$ is quasi-invariant, and we compute
the corresponding Radon-Nikodym derivative. Our motivation derives
from the need to realize multiresolution models in in a general setting
of dynamical systems as they arise in a host of applications: in symbolic
dynamics, e.g., \cite{MR2030387,MR1898809}, in generalized multiresolution
model, e.g., \cite{MR2520021}; in dynamics arising from an iteration
of substitutions, e.g., \cite{MR1128089}; in geometric measure theory,
and for Iterated Function Systems (IFS), e.g., \cite{MR625600,MR2529881};
or in stochastic analysis, e.g., \cite{MR2966130,MR625600,MR3411557,MR3411549,MR3392913,MR3392911,MR3360371,MR3221687,MR3014680}.

\section{\label{sec:setting}The Setting}

Organization of the paper: The setup starts with a fixed and given
compact Hausdorff space $X$, and positive operator $R$ (a generalized
transfer operator), defined on the function algebra $C\left(X\right)$,
and subject to two simple axioms. Candidates for $X$ will include
compact Bratteli diagrams, see e.g., \cite{MR2205435,MR2041268,MR1812067,MR1194074}.
Given $R$, in principle one is then able to derive a system of generalized
transition probabilities for discrete time processes starting from
points in $X$; see details in the present section. However, in order
to build path-space probability spaces this way, more considerations
are required, and this will be explored in detail in Sections \ref{sec:L1}
and \ref{sec:EA} below. \secref{IFS} deals with a subfamily of systems
where the generalized transfer operator $R$ is associated to an Iterated
Function system (IFS). In \secref{Harm}, we derive some conclusions
from the main theorems in the paper.

Let $X$ be a compact Hausdorff space, and $\mathcal{M}\left(X\right)$
be the space of all measurable functions on $X$. Let $R:C\left(X\right)\longrightarrow\mathcal{M}\left(X\right)$
be a positive linear mapping, i.e., $f\geq0$ $\Longrightarrow$ $Rf\geq0$. 
\begin{defn}
\label{def:R}Let $\mathscr{L}\left(R\right)$ be the set of all positive
Borel measures $\lambda$ on $X$ s.t. 
\begin{align}
 & R\left(C\left(X\right)\right)\subset L^{1}\left(\lambda\right),\mbox{ and}\label{eq:m1}\\
 & \lambda\cdot R\ll\lambda\mbox{ (absolutely continuous).}\label{eq:m2}
\end{align}
\end{defn}
\begin{note}
Let $R$ be as above, and set 
\begin{equation}
\mu=\lambda\cdot R,\mbox{ and }W=\frac{d\mu}{d\lambda}=\mbox{Radon-Nikodym derivative, then}\label{eq:m3}
\end{equation}
\begin{equation}
\lambda\left(Rf\right)\underset{\text{defn.}}{=}\underset{\mu\left(f\right)}{\underbrace{\int_{X}Rf\,d\lambda}}=\int_{X}fW\,d\lambda,\quad\forall f\in C\left(X\right).\label{eq:m4}
\end{equation}
$W$ depends on both $R$ and $\lambda$. \end{note}
\begin{defn}
\label{def:cm}For all $x\in X$, let 
\[
\mu_{x}=P\left(\cdot\mid x\right)
\]
be the conditional measure, where 
\begin{equation}
\mu_{x}\left(f\right):=\left(Rf\right)\left(x\right)=\int_{X}f\left(y\right)d\mu_{x}\left(y\right).\label{eq:R}
\end{equation}
\end{defn}
\begin{lem}
Let $X$, and $R$ $\textup{(}$positive in $C\left(X\right)$$\textup{)}$
be as before, then there is a system of measures $P\left(\cdot\mid x\right)$
such that 
\begin{equation}
\left(Rf\right)\left(x\right)=\int_{X}f\left(y\right)P\left(dy\mid x\right),\quad\forall f\in C\left(X\right).\label{eq:cp1}
\end{equation}
\end{lem}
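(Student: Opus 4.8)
The plan is to observe that for each fixed $x\in X$ the rule $f\mapsto\left(Rf\right)\left(x\right)$ defines a positive linear functional on $C\left(X\right)$, and then to apply the Riesz--Markov--Kakutani representation theorem to manufacture the representing measure $P\left(\cdot\mid x\right)$. Fixing $x$, I would set $L_{x}\left(f\right):=\left(Rf\right)\left(x\right)$ for $f\in C\left(X\right)$. Linearity of $L_{x}$ follows at once from the linearity of $R$ composed with point evaluation at $x$. Positivity is equally immediate: if $f\geq0$ then the standing hypothesis on $R$ gives $Rf\geq0$ as an element of $\mathcal{M}\left(X\right)$, so in particular $\left(Rf\right)\left(x\right)\geq0$, that is $L_{x}\left(f\right)\geq0$.

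Next I would record the standard fact that, on a compact Hausdorff space, a positive linear functional is automatically bounded with norm equal to its value at the constant function $\mathbbm{1}$. For real $f$ the estimate $-\left\Vert f\right\Vert_{\infty}\mathbbm{1}\leq f\leq\left\Vert f\right\Vert_{\infty}\mathbbm{1}$, upon applying the positive functional $L_{x}$, yields $\left|L_{x}\left(f\right)\right|\leq\left(R\mathbbm{1}\right)\left(x\right)\left\Vert f\right\Vert_{\infty}$. Thus $L_{x}$ is bounded with $\left\Vert L_{x}\right\Vert=\left(R\mathbbm{1}\right)\left(x\right)<\infty$, which also predicts that the measure produced below will have total mass $\left(R\mathbbm{1}\right)\left(x\right)$.

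With $L_{x}$ so identified, the Riesz--Markov--Kakutani theorem delivers a unique regular Borel measure on $X$, which I denote $P\left(\cdot\mid x\right)$, with $L_{x}\left(f\right)=\int_{X}f\left(y\right)\,P\left(dy\mid x\right)$ for every $f\in C\left(X\right)$. Unwinding the definition of $L_{x}$ reproduces exactly the asserted identity \eqref{eq:cp1}. Carrying out this construction at each point $x\in X$ furnishes the required system $\left\{P\left(\cdot\mid x\right)\right\}_{x\in X}$.

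The pointwise existence above presents no real difficulty, as it is nothing more than the classical representation theorem. The subtle point---should one wish $\left\{P\left(\cdot\mid x\right)\right\}_{x}$ to be a genuine transition kernel, as the ``conditional measure'' language of \defref{cm} hints---is measurability of $x\mapsto P\left(A\mid x\right)$ for Borel $A$, which Riesz--Markov by itself does not supply. This I would recover afterward: for each $f\in C\left(X\right)$ the map $x\mapsto\int_{X}f\,dP\left(\cdot\mid x\right)=\left(Rf\right)\left(x\right)$ is measurable precisely because $Rf\in\mathcal{M}\left(X\right)$, and a routine monotone-class argument then transports measurability from continuous integrands to indicators of Borel sets.
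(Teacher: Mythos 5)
Your argument is correct and is precisely the paper's proof: both apply the Riesz representation theorem to the positive linear functional $f\mapsto\left(Rf\right)\left(x\right)$ for each fixed $x\in X$. Your added remarks on the automatic boundedness of positive functionals and on the measurability of $x\mapsto P\left(A\mid x\right)$ are sound elaborations of details the paper leaves implicit, but the underlying route is the same.
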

\begin{proof}
Immediate from Riesz' theorem applied to the positive linear functional,
\[
C\left(X\right)\ni f\longrightarrow R\left(f\right)\left(x\right),\quad\forall x\in X.
\]
\end{proof}
\begin{cor}
\label{cor:Rext}$C\left(X\right)\ni f\longrightarrow R\left(f\right)\left(x\right)$
extends to $F\in\mathcal{M}\left(X\right)$, measurable functions
on $X$, s.t. the extended operator $\widetilde{R}$ is as follows:
\begin{equation}
\widetilde{R}\left(F\right)\left(x\right)=\int_{X}F\left(y\right)P\left(dy\mid x\right),\quad F\in\mathcal{M}\left(X\right).\label{eq:Re}
\end{equation}
We will write $R$ also for the extension $\widetilde{R}$. \end{cor}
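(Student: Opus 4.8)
The plan is to turn the fibre measures $P\left(\cdot\mid x\right)$ supplied by the preceding Lemma into a genuine extension by integrating against them, and then to verify the two things that ``$\widetilde{R}\left(F\right)\in\mathcal{M}\left(X\right)$'' really requires: that the integral makes sense for each $x$, and that the resulting function of $x$ is again measurable. First I would record that, since $X$ is compact, the constant function $1$ lies in $C\left(X\right)$, so for every $x$ the total mass $P\left(X\mid x\right)=\left(R1\right)\left(x\right)$ is finite; thus each $P\left(\cdot\mid x\right)$ is a finite positive Borel measure. Consequently $\widetilde{R}\left(F\right)\left(x\right):=\int_{X}F\left(y\right)\,P\left(dy\mid x\right)$ is well defined for every bounded measurable $F$, and for every nonnegative measurable $F$ with values allowed in $\left[0,\infty\right]$; for general $F\in\mathcal{M}\left(X\right)$ one restricts to the set of $x$ where $\int_{X}\left|F\right|\,dP\left(\cdot\mid x\right)<\infty$. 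That $\widetilde{R}$ extends $R$ is immediate from \eqref{eq:cp1}: for $f\in C\left(X\right)$ the defining integral equals $\left(Rf\right)\left(x\right)$.

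The heart of the matter is measurability of $x\mapsto\widetilde{R}\left(F\right)\left(x\right)$, that is, that $\left\{ P\left(\cdot\mid x\right)\right\} _{x\in X}$ is a measurable kernel. My approach is a functional monotone-class argument. Let $\mathcal{H}$ be the set of bounded measurable $F$ for which $x\mapsto\int_{X}F\,dP\left(\cdot\mid x\right)$ is measurable. Then $\mathcal{H}$ is a vector space, it contains $C\left(X\right)$ (because by hypothesis $R$ maps $C\left(X\right)$ into $\mathcal{M}\left(X\right)$, and on $C\left(X\right)$ the integral is exactly $Rf$), it contains the constants, and by the monotone convergence theorem together with the fact that a pointwise limit of measurable functions is measurable, $\mathcal{H}$ is closed under bounded monotone limits. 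Since $C\left(X\right)$ is an algebra that separates points and generates the relevant $\sigma$-algebra on the compact Hausdorff space $X$, the functional form of the monotone-class theorem yields that $\mathcal{H}$ contains all bounded measurable functions.

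Finally I would bootstrap from bounded to arbitrary $F\in\mathcal{M}\left(X\right)$. For $F\geq0$ put $F_{n}=\min\left(F,n\right)$; each $F_{n}$ is bounded measurable, $F_{n}\uparrow F$, and monotone convergence gives $\widetilde{R}\left(F\right)\left(x\right)=\lim_{n}\widetilde{R}\left(F_{n}\right)\left(x\right)$, so $\widetilde{R}\left(F\right)$ is measurable as an increasing limit of measurable functions (with values in $\left[0,\infty\right]$). A general $F$ is handled by splitting $F=F^{+}-F^{-}$ on the set where both pieces have finite $\widetilde{R}$-value, which yields linearity and the representation \eqref{eq:Re}.

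I expect the one genuine subtlety, and the main obstacle, to be the $\sigma$-algebra bookkeeping in the monotone-class step: on a general compact Hausdorff $X$ the Borel and Baire $\sigma$-algebras can differ, so one must either read $\mathcal{M}\left(X\right)$ as the Baire-measurable functions (those generated by $C\left(X\right)$, which is precisely what the monotone-class theorem produces) or invoke regularity of the Riesz measures $P\left(\cdot\mid x\right)$ to pass from Baire to Borel sets. Once this convention is fixed, the kernel measurability is exactly what the argument above delivers, and the extension $\widetilde{R}$ is unambiguous.
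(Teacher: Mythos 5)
Your proposal is correct and follows exactly the route the paper intends: the corollary is stated without proof as an immediate consequence of the preceding Riesz-representation lemma, the extension being nothing more than integration of $F$ against the kernel measures $P\left(\cdot\mid x\right)$. The only substantive content you add is the verification that $x\longmapsto\int_{X}F\,dP\left(\cdot\mid x\right)$ is again measurable, which you handle correctly by a functional monotone-class argument starting from $C\left(X\right)$ and then truncating, and your observation that this literally produces Baire measurability (so that on a non-metrizable compact Hausdorff $X$ one must either read $\mathcal{M}\left(X\right)$ as Baire functions or invoke regularity of the Riesz measures) is a genuine subtlety the paper passes over in silence.
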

\begin{rem}
\label{rem:L1}Let $X$, and $R$ be as specified in \defref{R}.
Set 
\[
\mathscr{L}_{1}\left(R\right)=\left\{ \lambda\in\mathscr{L}\left(R\right)\mid\lambda\left(X\right)=1\right\} .
\]
Clearly, $\mathscr{L}_{1}\left(R\right)$ is convex. In this generality,
we address two questions:
\begin{itemize}
\item[Q1.] We show that $\mathscr{L}_{1}\left(R\right)$ is non-empty.
\item[Q2.] What are the extreme points in $\mathscr{L}_{1}\left(R\right)$?
\end{itemize}
\end{rem}
\begin{lem}
Let $\mu_{x}=P\left(\cdot\mid x\right)$ be as above. Let $\lambda\in\mathscr{L}\left(R\right)$,
and let $W$ be the Radon-Nikodym derivative from (\ref{eq:m3}).
Then 
\begin{equation}
\int_{X}P\left(\cdot\mid x\right)d\lambda\left(x\right)=W\left(\cdot\right)d\lambda\left(\cdot\right).\label{eq:c1}
\end{equation}
\end{lem}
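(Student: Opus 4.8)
The plan is to read \eqref{eq:c1} as an identity between two positive Borel measures on $X$ and to verify it by pairing both sides against an arbitrary $f\in C\left(X\right)$, then invoking the uniqueness clause of the Riesz representation theorem. Write $\nu$ for the left-hand side, i.e., the mixture measure
\[
\nu\left(E\right)=\int_{X}P\left(E\mid x\right)d\lambda\left(x\right),\qquad E\subset X\text{ Borel,}
\]
and write $\mu=W\,d\lambda=\lambda\cdot R$ for the right-hand side, as in \eqref{eq:m3}. Since both are positive Borel measures on the compact Hausdorff space $X$, it suffices to show that $\int_{X}f\,d\nu=\int_{X}f\,d\mu$ for every $f\in C\left(X\right)$.

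First I would check that $\nu$ is well defined as a measure. For fixed Borel $E$ the integrand $x\longmapsto P\left(E\mid x\right)$ must be $\lambda$-measurable; this holds because, by construction (Corollary \ref{cor:Rext} together with \eqref{eq:R}), the map $x\longmapsto\int_{X}f\,dP\left(\cdot\mid x\right)=\left(Rf\right)\left(x\right)$ lies in $\mathcal{M}\left(X\right)$ for every $f\in C\left(X\right)$, and a monotone-class/Dynkin argument then upgrades measurability of the integrand from continuous functions to indicator functions of Borel sets. Finiteness and countable additivity of $\nu$ follow in the same way (taking $f\equiv1$ for finiteness, and monotone convergence for $\sigma$-additivity).

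The main computation is then immediate. For $f\in C\left(X\right)$, the definition \eqref{eq:R} of the conditional measure gives $\int_{X}f\left(y\right)P\left(dy\mid x\right)=\left(Rf\right)\left(x\right)$, while \eqref{eq:m1} guarantees $Rf\in L^{1}\left(\lambda\right)$, so the iterated integral is legitimate and
\[
\int_{X}f\,d\nu=\int_{X}\Bigl(\int_{X}f\left(y\right)P\left(dy\mid x\right)\Bigr)d\lambda\left(x\right)=\int_{X}\left(Rf\right)\left(x\right)d\lambda\left(x\right)=\lambda\left(Rf\right).
\]
By the defining relation \eqref{eq:m4} for $W$, the last quantity equals $\int_{X}f\,W\,d\lambda=\int_{X}f\,d\mu$. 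Hence $\nu$ and $\mu$ induce the same positive linear functional on $C\left(X\right)$.

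Finally, since $X$ is compact Hausdorff and $\nu,\mu$ are both finite (hence regular) Borel measures agreeing on $C\left(X\right)$, the uniqueness part of Riesz' theorem forces $\nu=\mu$, which is exactly \eqref{eq:c1}. The only genuinely delicate point in this scheme is the measurability of $x\longmapsto P\left(E\mid x\right)$ underpinning the very definition of $\nu$; once that is in place, the identity reduces to a one-line unwinding of \eqref{eq:R} and \eqref{eq:m4}. In particular, because the equality need only be tested on $C\left(X\right)$ — where the conditional measures collapse to $R$ by \eqref{eq:R} — no Fubini interchange beyond this measurability is required.
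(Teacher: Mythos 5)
Your proposal is correct and follows essentially the same route as the paper: both reduce \eqref{eq:c1} to the defining relation \eqref{eq:m4} together with \eqref{eq:R}, the paper by substituting $f=\chi_{E}$ directly, you by testing against $f\in C\left(X\right)$ and then invoking Riesz uniqueness for regular Borel measures. Your version is slightly more careful about the measurability of $x\longmapsto P\left(E\mid x\right)$ and about why \eqref{eq:m4}, stated only for continuous $f$, suffices, but the substance is identical.
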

\begin{proof}
Immediate from the definition. Indeed, for all Borel subset $E\subset X$,
the following are equivalent ($f=\chi_{E}$): 
\begin{alignat*}{1}
\int Rf\,d\lambda & =\int fW\,d\lambda\\
 & \Updownarrow\\
\int f\left(y\right)P\left(dy\mid x\right)d\lambda\left(x\right) & =\int f\left(x\right)W\left(x\right)d\lambda\left(x\right)\\
 & \Updownarrow\\
\int P\left(E\mid x\right)d\lambda\left(x\right) & =\int_{E}W\left(y\right)d\lambda\left(y\right)\\
 & \Updownarrow\\
\int_{X}P\left(\cdot\mid x\right)d\lambda\left(x\right) & =W\left(\cdot\right)d\lambda\left(\cdot\right)
\end{alignat*}
\end{proof}
\begin{rem}
In general, $\lambda\neq P\left(\cdot\mid x_{0}\right)$, $x_{0}\in X$.
Note that $\lambda=P\left(\cdot\mid x_{0}\right)\in\mathscr{L}\left(R\right)$
iff 
\begin{align}
\int_{y}P\left(\cdot\mid y\right)P\left(dy\mid x_{0}\right) & =W\left(\cdot\right)P\left(\cdot\mid x_{0}\right),\mbox{ i.e., }\nonumber \\
\int_{y}P\left(dz\mid y\right)P\left(dy\mid x_{0}\right) & =W\left(z\right)P\left(dz\mid x_{0}\right)\label{eq:c2}
\end{align}
However, condition (\ref{eq:c2}) is very restrictive, and it is not
satisfied in many cases. See \exaref{IFS} below. \end{rem}
\begin{example}[Iterated Function System (IFS); see e.g., \cite{MR2869044,MR2520021}]
\label{exa:IFS} Let $X=\left[0,1\right]=\mathbb{R}/\mathbb{Z}$,
and $\lambda=$ Lebesgue measure. Fix $v>0$, a positive function
on $\left[0,1\right]$, and set 
\[
\left(Rf\right)\left(x\right)=v\left(\frac{x}{2}\right)f\left(\frac{x}{2}\right)+v\left(\frac{x+1}{2}\right)f\left(\frac{x+1}{2}\right).
\]
Then 
\[
P\left(\cdot\mid x\right)=\underset{\text{atomic}}{\underbrace{v\left(\frac{x}{2}\right)\delta_{\frac{x}{2}}+v\left(\frac{x+1}{2}\right)\delta_{\frac{x+1}{2}}}}\not\ll\underset{\text{non-atomic}}{\underbrace{\vphantom{\left(\frac{x+1}{2}\right)}\lambda}.}
\]
\end{example}
\begin{assumption*}[Additional axiom on $R$]
Let $R$ be the positive mapping in \defref{R}. Assume there exists
$\sigma:X\longrightarrow X$, measurable and \uline{onto}, such
that 
\begin{equation}
R\left(\left(f\circ\sigma\right)g\right)=fRg,\quad\forall f,g\in C\left(X\right).\label{eq:m5}
\end{equation}
$R$ in (\ref{eq:m5}) is a generalized conditional expectation.\end{assumption*}
\begin{lem}
Let $R$ satisfy (\ref{eq:m5}) and let $\left\{ P\left(\cdot\mid x\right)\right\} _{x\in X}$
be the family of conditional measures in \defref{cm}. Then,
\begin{equation}
P\left(E\mid x\right)=\int_{E}\frac{f\left(\sigma\left(y\right)\right)}{f\left(x\right)}P\left(dy\mid x\right)\label{eq:m51}
\end{equation}
for all $f\in C\left(X\right)$, and all $E\in\mathscr{B}\left(X\right)$;
where $\mathscr{B}\left(X\right)$ denotes all Borel subsets of $X$. \end{lem}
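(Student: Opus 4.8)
The plan is to unwind the functional equation (\ref{eq:m5}) into an integral identity against the conditional measures $P(\cdot\mid x)$, and then to promote the test function from $C(X)$ to an arbitrary indicator $\chi_{E}$. First I would substitute the integral representation (\ref{eq:R}) of $R$ into both sides of (\ref{eq:m5}). For $f,g\in C(X)$ the left-hand side becomes $R\left(\left(f\circ\sigma\right)g\right)(x)=\int_{X}f\left(\sigma\left(y\right)\right)g\left(y\right)P\left(dy\mid x\right)$, while the right-hand side is $f(x)\left(Rg\right)(x)=f(x)\int_{X}g\left(y\right)P\left(dy\mid x\right)$. Equating the two and collecting terms yields
\[
\int_{X}\bigl[f\left(\sigma\left(y\right)\right)-f\left(x\right)\bigr]\,g\left(y\right)\,P\left(dy\mid x\right)=0,\qquad\forall f,g\in C\left(X\right).
\]

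Next, fix $f\in C(X)$ and $x\in X$, and regard $\nu\left(dy\right):=\bigl[f\left(\sigma\left(y\right)\right)-f\left(x\right)\bigr]P\left(dy\mid x\right)$ as a finite signed Borel measure on $X$; finiteness holds because $f$ is bounded and $P\left(\cdot\mid x\right)$ is a finite positive measure (its total mass is $\left(R\mathbbm{1}\right)(x)<\infty$). The displayed identity says precisely that $\nu$ annihilates $C(X)$. Since $X$ is compact Hausdorff, the uniqueness part of Riesz' representation theorem forces $\nu\equiv0$ as a regular Borel measure. Equivalently, one may invoke the extended operator $\widetilde{R}$ of \corref{Rext} together with a monotone-class argument to pass from continuous $g$ to $g=\chi_{E}$. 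Either route gives, for every Borel set $E$,
\[
\int_{E}f\left(\sigma\left(y\right)\right)\,P\left(dy\mid x\right)=f\left(x\right)\,P\left(E\mid x\right),
\]
and dividing by $f(x)$ at points where $f(x)\neq0$ produces the asserted formula (\ref{eq:m51}).

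The main obstacle is the extension step from $g\in C(X)$ to the indicator $\chi_{E}$: one must verify that the vanishing of $\nu$ on $C(X)$ genuinely transfers to all Borel sets. On a compact Hausdorff space this is routine via regularity of finite Borel measures, and it is in fact already encoded in \corref{Rext}, which guarantees that the integral formula for $R$ persists for measurable integrands. The only remaining point of care is the division by $f(x)$, which is harmless: the substantive statement is the integral equation above, and the quotient form (\ref{eq:m51}) is merely a rewriting valid wherever $f(x)\neq0$. It is worth emphasizing what this identity reveals, namely that $f\left(\sigma\left(y\right)\right)=f\left(x\right)$ for $P\left(\cdot\mid x\right)$-almost every $y$ and every $f\in C(X)$; since $C(X)$ separates points of the compact Hausdorff space $X$, this forces $\sigma\left(y\right)=x$ a.e., i.e.\ that $P\left(\cdot\mid x\right)$ is carried by the fiber $\sigma^{-1}\left(x\right)$.
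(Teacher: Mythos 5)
Your proof is correct and follows essentially the same route as the paper's: substitute the integral representation (\ref{eq:R}) into (\ref{eq:m5}) to get $\int f(\sigma(y))g(y)\,P(dy\mid x)=f(x)\int g(y)\,P(dy\mid x)$ for all $f,g\in C(X)$, then pass to $g=\chi_{E}$. The only difference is that you justify the extension from continuous $g$ to indicators via Riesz uniqueness and regularity (and flag the division by $f(x)$), steps the paper performs without comment; your closing observation that $P(\cdot\mid x)$ is carried by the fiber $\sigma^{-1}(x)$ is a correct and worthwhile addition, though not needed for the lemma.
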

\begin{proof}
We have
\begin{align*}
R\left(\left(f\circ\sigma\right)g\right)\left(x\right) & =f\left(x\right)R\left(g\right)\left(x\right)\\
 & \Updownarrow\\
\int f\left(\sigma\left(y\right)\right)g\left(y\right)P\left(dy\mid x\right) & =f\left(x\right)\int g\left(y\right)P\left(dy\mid x\right),\quad\forall f,g\in C\left(X\right),\forall x\in X.\\
 & \Updownarrow\quad g=\chi_{E}\\
\int_{E}f\left(\sigma\left(y\right)\right)P\left(dy\mid x\right) & =f\left(x\right)P\left(E\mid x\right),\quad\forall f\in C\left(X\right),\forall E\in\mathscr{B}\left(X\right),
\end{align*}
and the assertion follows.\end{proof}
\begin{lem}
\label{lem:adjoint}Suppose (\ref{eq:m5}) holds and $\lambda\in\mathscr{L}\left(R\right)$.
Set $W=$ the Radon-Nikodym derivative, then the operator $S:f\longrightarrow Wf\circ\sigma$
is well-defined and linear in $L^{2}\left(\lambda\right)$ with $C\left(X\right)$
as dense domain. In general $S$ is unbounded. Moreover, 
\begin{equation}
S\subset R^{*}\mbox{, containment of unbounded operators,}\label{eq:m6}
\end{equation}
where $R^{*}$ denotes the adjoint operator to $R$, i.e., 
\begin{equation}
\int_{X}\left(Wf\circ\sigma\right)g\,d\lambda=\int_{X}fR\left(g\right)d\lambda;\label{eq:m7}
\end{equation}
holds for all $f,g\in C\left(X\right)$. That is, 
\begin{equation}
R^{*}f=Wf\circ\sigma,\quad\forall f\in C\left(X\right),\label{eq:m71}
\end{equation}
as a weighted composition operator.

Further, the selfadjoint operator $RR^{*}$ is the multiplication
operator: 
\begin{equation}
RR^{*}f=R\left(W\right)f,\quad\forall f\in C\left(X\right);\label{eq:m8}
\end{equation}
i.e., multiplication by the function $R\left(W\right)$. \end{lem}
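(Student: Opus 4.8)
The plan is to let the single adjoint identity (\ref{eq:m7}) carry the whole lemma: once it is established, the containment $S\subset R^{*}$ in (\ref{eq:m6}), the formula (\ref{eq:m71}), and the product formula (\ref{eq:m8}) all drop out. The identity (\ref{eq:m7}) itself should come from feeding the module axiom (\ref{eq:m5}) into the defining relation (\ref{eq:m4}) for the Radon--Nikodym derivative $W$.

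Concretely, first I would fix $f,g\in C(X)$ and apply (\ref{eq:m4}) to the continuous function $h=(f\circ\sigma)\,g$, giving $\int_{X}R\big((f\circ\sigma)g\big)\,d\lambda=\int_{X}(f\circ\sigma)\,g\,W\,d\lambda$. By the additional axiom (\ref{eq:m5}), the left-hand integrand equals $R((f\circ\sigma)g)=f\,Rg$, so that $\int_{X}f\,Rg\,d\lambda=\int_{X}\big(Wf\circ\sigma\big)g\,d\lambda$, which is exactly (\ref{eq:m7}). Every integrand here is genuinely in $L^{1}(\lambda)$: $f\circ\sigma$ and $g$ are bounded measurable and $W\in L^{1}(\lambda)$ as a Radon--Nikodym derivative, while $Rg\in L^{1}(\lambda)$ by (\ref{eq:m1}); this is the routine part. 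Reading (\ref{eq:m7}) as $\langle Sf,g\rangle_{L^{2}(\lambda)}=\langle f,Rg\rangle_{L^{2}(\lambda)}$ for all $g$ in the (dense) domain $C(X)$ of $R$ says precisely that $f\in\mathrm{dom}(R^{*})$ with $R^{*}f=Sf=Wf\circ\sigma$, i.e. (\ref{eq:m6}) and (\ref{eq:m71}).

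For the well-definedness claims I would note that $C(X)$ is dense in $L^{2}(\lambda)$ by regularity of the Borel measure $\lambda$ on the compact Hausdorff space $X$, and that $S$ is manifestly linear. Since $S$ agrees with the adjoint $R^{*}$ of a densely defined operator, it is automatically single-valued and closable, and the stated unboundedness is inherited from that of $R$ (and is visible in the IFS model of \exaref{IFS}, where $W$ and the atomic kernel $P(\cdot\mid x)$ are badly behaved). The delicate point — and the one I expect to be the real obstacle — is the tacit assertion that $Sf=Wf\circ\sigma$ actually lands in $L^{2}(\lambda)$ for every $f\in C(X)$, equivalently that $C(X)\subset\mathrm{dom}(R^{*})$. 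Since $f\circ\sigma$ is bounded, this amounts to $W\in L^{2}(\lambda)$ (and, dually, to $Rg\in L^{2}(\lambda)$, so that $R$ is an operator in $L^{2}(\lambda)$ at all), which is an integrability hypothesis beyond the bare axioms; I would either record it as a standing assumption or derive it from whatever additional integrability is available on $W$, handling the general case by reading the identities as graph inclusions of unbounded operators.

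Finally, for (\ref{eq:m8}) I would compute $RR^{*}f=R\big(Wf\circ\sigma\big)=R\big((f\circ\sigma)W\big)$ and apply the module property with the second slot equal to $W$ to get $R((f\circ\sigma)W)=f\,R(W)$, i.e. multiplication by $R(W)$. The one thing needing justification is that (\ref{eq:m5}), stated for $g\in C(X)$, persists for the merely measurable $W$ under the extension $\widetilde{R}$ of \corref{Rext}; this is not a limiting nuisance but a structural fact, since the preceding lemma (\ref{eq:m51}) forces $f(\sigma(y))=f(x)$ for $P(\cdot\mid x)$-almost every $y$, whence $\int f(\sigma(y))F(y)\,P(dy\mid x)=f(x)\int F(y)\,P(dy\mid x)$ for \emph{all} bounded measurable $F$, $W$ included. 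This makes the passage from $C(X)$ to $W$ immediate and completes (\ref{eq:m8}).
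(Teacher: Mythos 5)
Your argument is the same as the paper's: combine the module axiom (\ref{eq:m5}) with the Radon--Nikodym identity (\ref{eq:m4}) applied to $(f\circ\sigma)g$ to obtain (\ref{eq:m7}), hence (\ref{eq:m6})--(\ref{eq:m71}), and then apply (\ref{eq:m5}) once more with $W$ in the second slot to get $RR^{*}f=fR(W)$. Your additional remarks on the integrability needed for $Sf\in L^{2}(\lambda)$ and on extending (\ref{eq:m5}) to the measurable function $W$ address points the paper leaves implicit, but they do not change the route.
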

\begin{proof}
For all $f,g\in C\left(X\right)$, we have 

\begin{eqnarray*}
\int_{X}fR\left(g\right)d\lambda & \underset{\text{by \ensuremath{\left(\ref{eq:m5}\right)}}}{=} & \int_{X}R\left(\left(f\circ\sigma\right)g\right)d\lambda\\
 & \underset{\text{by \ensuremath{\left(\ref{eq:m4}\right)}}}{=} & \int_{X}\underset{=S\left(f\right)}{\underbrace{\left(W\left(f\circ\sigma\right)\right)}}g\,d\lambda,
\end{eqnarray*}
and so (\ref{eq:m6})-(\ref{eq:m7}) follow. Also, 
\[
RR^{*}f\underset{\text{by \ensuremath{\left(\ref{eq:m6}\right)}}}{=}R\left(W\left(f\circ\sigma\right)\right)\underset{\text{by \ensuremath{\left(\ref{eq:m5}\right)}}}{=}fR\left(W\right)=mf,
\]
where $m=R\left(W\right)$. The assertion (\ref{eq:m8}) follows from
this. \end{proof}
\begin{cor}
$S$ is isometric in $L^{2}\left(\lambda\right)$ $\Longleftrightarrow$
$R\left(W\right)=1$. 
\end{cor}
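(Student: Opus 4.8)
The plan is to reduce the isometry condition $\|Sf\|_{L^{2}\left(\lambda\right)}=\|f\|_{L^{2}\left(\lambda\right)}$ to a pointwise identity for $R\left(W\right)$, using the computation of $RR^{*}$ already recorded in \lemref{adjoint}. Since $S$ and $R^{*}$ agree on the dense domain $C\left(X\right)$ by (\ref{eq:m71}), I would first write, for $f\in C\left(X\right)$,
\[
\|Sf\|_{L^{2}\left(\lambda\right)}^{2}=\left\langle R^{*}f,R^{*}f\right\rangle _{L^{2}\left(\lambda\right)}=\left\langle f,RR^{*}f\right\rangle _{L^{2}\left(\lambda\right)},
\]
where the second equality is the defining pairing for the adjoint. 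Invoking (\ref{eq:m8}), $RR^{*}f=R\left(W\right)f$, the right-hand side becomes $\int_{X}R\left(W\right)\left|f\right|^{2}d\lambda$, which is to be compared with $\|f\|_{L^{2}\left(\lambda\right)}^{2}=\int_{X}\left|f\right|^{2}d\lambda$.

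From here the equivalence is immediate in one direction and a standard density argument in the other. If $R\left(W\right)=1$ $\lambda$-a.e., the two integrals coincide for every $f$, so $S$ is isometric on its dense domain. Conversely, isometry gives $\int_{X}\left(R\left(W\right)-1\right)\left|f\right|^{2}d\lambda=0$ for all $f\in C\left(X\right)$; since the functions $\left|f\right|^{2}$, $f\in C\left(X\right)$, exhaust the nonnegative elements of $C\left(X\right)$, which in turn separate the signed measure $\left(R\left(W\right)-1\right)d\lambda$ (well-defined by (\ref{eq:m1})--(\ref{eq:m2})), this forces $R\left(W\right)=1$ $\lambda$-a.e.

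The one point requiring care is the adjoint manipulation $\left\langle R^{*}f,R^{*}f\right\rangle =\left\langle f,RR^{*}f\right\rangle $ for the unbounded operator $R$: I must verify that $R^{*}f=Wf\circ\sigma$ lies in the domain on which $R$, extended as in \corref{Rext}, acts and for which (\ref{eq:m8}) was derived. To sidestep this entirely, an alternative is the direct route, which I expect to be the cleaner path: apply the conditional-expectation axiom (\ref{eq:m5}) to the pair $\left(\left|f\right|^{2},W\right)$ to get $R\left(\left(\left|f\right|^{2}\circ\sigma\right)W\right)=\left|f\right|^{2}R\left(W\right)$, then integrate against $\lambda$ and use (\ref{eq:m4}) on the left-hand side to obtain $\int_{X}W^{2}\left|f\circ\sigma\right|^{2}d\lambda=\int_{X}R\left(W\right)\left|f\right|^{2}d\lambda$, i.e. $\|Sf\|^{2}=\int_{X}R\left(W\right)\left|f\right|^{2}d\lambda$ with no unbounded-operator bookkeeping. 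Either path yields the same integral identity, after which the density argument of the previous paragraph closes the proof.
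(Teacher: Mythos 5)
Your proposal is correct and follows the route the paper intends: the corollary is stated without proof, immediately after \lemref{adjoint}, as a direct consequence of $RR^{*}f=R\left(W\right)f$ in (\ref{eq:m8}), which is exactly your primary computation $\left\Vert Sf\right\Vert ^{2}=\left\langle f,RR^{*}f\right\rangle =\int_{X}R\left(W\right)\left|f\right|^{2}d\lambda$ followed by the standard density argument (yielding $R\left(W\right)=1$ $\lambda$-a.e., which is how the stated identity must be read). Your alternative derivation directly from (\ref{eq:m5}) and (\ref{eq:m4}) is a sound way to sidestep the unbounded-adjoint domain issue you rightly flag, but it does not change the substance of the argument.
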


\begin{cor}
$R$ defines a \uline{bounded} operator on $L^{2}\left(\lambda\right)$,
i.e., $L^{2}\left(\lambda\right)\xrightarrow{\;R\;}L^{2}\left(\lambda\right)$
is bounded $\Longleftrightarrow$ $R\left(W\right)\in L^{\infty}\left(\lambda\right)$. \end{cor}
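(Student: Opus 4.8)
The plan is to read the Corollary off the operator identity $RR^{*}=M_{R(W)}$ (multiplication by $R(W)$) recorded in (\ref{eq:m8}), combined with two standard Hilbert-space facts: a densely defined operator is bounded if and only if its adjoint is, and the multiplication operator $M_{\varphi}$ on $L^{2}(\lambda)$ is bounded if and only if $\varphi\in L^{\infty}(\lambda)$, with $\|M_{\varphi}\|=\|\varphi\|_{L^{\infty}(\lambda)}$. Since \lemref{adjoint} identifies $S:f\mapsto Wf\circ\sigma$ with the restriction $R^{*}|_{C(X)}$, the whole argument can be organized around the single identity, valid for every $f\in C(X)$,
\[
\|Sf\|_{L^{2}(\lambda)}^{2}=\int_{X}R(W)\,|f|^{2}\,d\lambda .
\]
This is nothing but $\langle RR^{*}f,f\rangle=\int R(W)\,|f|^{2}\,d\lambda$ coming from (\ref{eq:m8}); alternatively it is checked directly by applying (\ref{eq:m7}) to the pair $(\overline{f},\,Wf\circ\sigma)$ and then using the multiplicativity (\ref{eq:m5}) in the form $R\big((f\circ\sigma)W\big)=fR(W)$.

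For the implication ($\Leftarrow$), suppose $R(W)\in L^{\infty}(\lambda)$. The displayed identity gives $\|Sf\|^{2}\le\|R(W)\|_{\infty}\,\|f\|^{2}$ for all $f\in C(X)$, so $S=R^{*}|_{C(X)}$ is bounded on its dense domain and extends to a bounded operator on $L^{2}(\lambda)$. In particular $R^{*}$ is densely defined, hence $R$ is closable and $R\subseteq R^{**}=(R^{*})^{*}$; as the adjoint of a bounded operator is bounded, $R$ is bounded, with $\|R\|\le\|R(W)\|_{\infty}^{1/2}$.

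For the implication ($\Rightarrow$), suppose $R$ is bounded. Then $R^{*}$ is bounded with $\|R^{*}\|=\|R\|$, and since $S\subseteq R^{*}$ the displayed identity yields
\[
\int_{X}R(W)\,|f|^{2}\,d\lambda\le\|R\|^{2}\int_{X}|f|^{2}\,d\lambda,\qquad f\in C(X).
\]
Here I would invoke positivity: $R$ is a positive operator and $W=d\mu/d\lambda\ge0$ (it is the Radon--Nikodym derivative of the positive measure $\mu=\lambda\cdot R$), so $R(W)\ge0$. Every nonnegative $\varphi\in C(X)$ is of the form $|f|^{2}$ with $f=\sqrt{\varphi}\in C(X)$, so the inequality reads $\int_{X}\big(R(W)-\|R\|^{2}\big)\varphi\,d\lambda\le0$ for all such $\varphi$; comparing the Borel measures $R(W)\,d\lambda$ and $\|R\|^{2}\,d\lambda$ on nonnegative continuous test functions, regularity of $\lambda$ forces $R(W)\le\|R\|^{2}$ $\lambda$-a.e., and together with $R(W)\ge0$ we conclude $R(W)\in L^{\infty}(\lambda)$.

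The step needing the most care is the passage, in ($\Leftarrow$), from boundedness of the restriction $S=R^{*}|_{C(X)}$ to boundedness of $R$ itself: one cannot simply write $\|R\|=\|R^{*}\|$ before $R$ is known to be bounded, but must instead use that $R^{*}$ is densely defined to deduce closability of $R$ and the inclusion $R\subseteq R^{**}$. The only other point requiring a short measure-theoretic argument is the a.e.\ bound $R(W)\le\|R\|^{2}$ extracted from testing against nonnegative continuous functions, where the positivity $R(W)\ge0$ is exactly what upgrades a one-sided integral bound into genuine membership in $L^{\infty}(\lambda)$.
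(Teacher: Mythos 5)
Your proof is correct and follows the same route as the paper, whose entire justification is to cite the identity $RR^{*}=M_{R(W)}$ from (\ref{eq:m8}) together with $\left\Vert RR^{*}\right\Vert _{2\rightarrow2}=\left\Vert R\right\Vert _{2\rightarrow2}^{2}$. You additionally supply the details the paper omits --- in particular the closability argument needed to pass from boundedness of $S=R^{*}|_{C\left(X\right)}$ to boundedness of $R$ (the $C^{*}$-identity cannot be invoked before boundedness is already known), and the regularity argument upgrading $\int_{X}R\left(W\right)\left|f\right|^{2}d\lambda\leq\left\Vert R\right\Vert ^{2}\left\Vert f\right\Vert ^{2}$ to the a.e.\ bound $R\left(W\right)\leq\left\Vert R\right\Vert ^{2}$ --- both of which are real gaps in the paper's one-line proof.
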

\begin{proof}
Immediate from (\ref{eq:m8}) since 
\begin{equation}
\left\Vert RR^{*}\right\Vert _{2\rightarrow2}=\left\Vert R\right\Vert _{2\rightarrow2}^{2}=\left\Vert R^{*}\right\Vert _{2\rightarrow2}^{2}.\label{eq:m9}
\end{equation}
\end{proof}
\begin{rem}
Let $\lambda\in\mathscr{L}\left(R\right)$, $\mu=\lambda\cdot R$,
and $W=d\mu/d\lambda$ as before. Even if $W\in L^{1}\left(\lambda\right)$,
the following two operators are still well-defined:
\[
L^{2}\left(\lambda\right)\supset\left\{ \begin{split}C\left(X\right) & \ni f\xrightarrow{\quad R\quad}Rf\in L^{\infty}\left(\lambda\right)\subset L^{2}\left(\lambda\right)\\
C\left(X\right) & \ni f\xrightarrow{\quad S\quad}W\left(f\circ\sigma\right)\in L^{2}\left(\lambda\right)
\end{split}
\right\} ,\mbox{ and}
\]
\[
\left\langle Sf,g\right\rangle _{L^{2}}=\left\langle f,Rg\right\rangle _{L^{2}},\quad\forall f,g\in C\left(X\right).
\]
\end{rem}
\begin{cor}
\label{cor:ls}Assume $\frac{1}{W}$ is well-defined. Then $\lambda\circ\sigma^{-1}\ll\lambda$,
and 
\[
\frac{d\lambda\circ\sigma^{-1}}{d\lambda}=R\left(\frac{1}{W}\right),
\]
where $R\left(\frac{1}{W}\right)$ is defined as in (\ref{eq:Re})
of \corref{Rext}.\end{cor}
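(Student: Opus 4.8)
The plan is to verify the claimed Radon--Nikodym identity by testing against an arbitrary $f\in C(X)$ and reducing everything to the two structural relations already in hand: the multiplicative axiom (\ref{eq:m5}) (in its measurable extension) and the intertwining identity (\ref{eq:m4}), equivalently (\ref{eq:c1}). By definition of the image measure, $\int_X f\,d(\lambda\circ\sigma^{-1})=\int_X (f\circ\sigma)\,d\lambda$, so the whole statement amounts to proving
\[
\int_X (f\circ\sigma)\,d\lambda=\int_X f\,R\!\left(\tfrac{1}{W}\right)d\lambda,\qquad\forall f\in C(X),
\]
since this exhibits $R(1/W)$ as a nonnegative $\lambda$-integrable density for $\lambda\circ\sigma^{-1}$, and in particular forces $\lambda\circ\sigma^{-1}\ll\lambda$.

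First I would apply the axiom (\ref{eq:m5}) with $g=1/W$. Because $1/W$ is only measurable, I read (\ref{eq:m5}) through its pointwise conditional-measure form: from (\ref{eq:m51}) (more precisely the identity $\int f(\sigma(y))g(y)\,P(dy\mid x)=f(x)\int g(y)\,P(dy\mid x)$ derived in its proof) together with the extended operator $\widetilde R$ of \corref{Rext}, one obtains, as measurable functions on $X$,
\[
R\!\left((f\circ\sigma)\tfrac{1}{W}\right)=f\,R\!\left(\tfrac{1}{W}\right).
\]
Next I would integrate both sides against $\lambda$ and evaluate the left-hand side via the measurable extension of (\ref{eq:m4}). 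Writing $\phi=(f\circ\sigma)/W$ and using (\ref{eq:c1}) with Tonelli's theorem to interchange the order of integration in $\int_X\big(\int_X \phi(y)\,P(dy\mid x)\big)\,d\lambda(x)$, one gets $\int_X R(\phi)\,d\lambda=\int_X \phi\,W\,d\lambda$; here the factors $1/W$ and $W$ cancel, leaving precisely $\int_X (f\circ\sigma)\,d\lambda$. Comparing with $\int_X f\,R(1/W)\,d\lambda$ on the right yields the displayed identity.

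The main obstacle is not the algebra --- the cancellation of $W$ against $1/W$ is the whole mechanism --- but the passage from continuous to merely measurable arguments, which requires three points of care. First, $\sigma$ is only measurable, so $f\circ\sigma\notin C(X)$ in general and one must genuinely use $\widetilde R$ rather than $R$ on $C(X)$. Second, $1/W$ may be unbounded and $R(1/W)$ may a priori equal $+\infty$ on a set of positive measure, so I would first run the argument for $f\ge 0$, where every integral is an unambiguous element of $[0,\infty]$ and Tonelli applies without integrability hypotheses, and only afterwards extend to signed or complex $f$ by linearity. Third, one must confirm that $R(1/W)$ is a bona fide density: this follows from $\lambda\circ\sigma^{-1}(X)=\lambda(\sigma^{-1}(X))=\lambda(X)<\infty$, whence $R(1/W)=d(\lambda\circ\sigma^{-1})/d\lambda$ is $\lambda$-integrable and finite $\lambda$-a.e. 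Finally, the hypothesis that $1/W$ ``is well-defined'' is exactly what guarantees $W>0$ $\lambda$-a.e., so that $1/W$ is an honest measurable function and $W\cdot(1/W)=1$ holds $\lambda$-a.e., legitimizing the cancellation.
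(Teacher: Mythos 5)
Your argument is correct and follows essentially the same route as the paper: both reduce the claim to the identity $\int f\,d(\lambda\circ\sigma^{-1})=\int (f\circ\sigma)\,d\lambda=\int f\,R(1/W)\,d\lambda$ for $f\in C(X)$, obtained by inserting $W\cdot(1/W)=1$ and invoking the duality between $f\mapsto W(f\circ\sigma)$ and $R$ (the paper cites (\ref{eq:m7}) directly where you re-derive it from (\ref{eq:m5}) and (\ref{eq:m4}) with $g=1/W$). Your additional attention to the measurable extension $\widetilde R$, to running the argument first for $f\ge 0$, and to the integrability of $R(1/W)$ supplies details the paper's ``one checks that'' elides, but the mechanism is identical.
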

\begin{proof}
Recall the pull-back measure $\lambda\circ\sigma^{-1}$, where $\sigma^{-1}\left(E\right)=\left\{ z\in X\mid\sigma\left(z\right)\in E\right\} $,
for all Borel sets $E\subset X$. One checks that 
\[
\int f\,d\lambda\circ\sigma^{-1}=\int f\circ\sigma\,d\lambda=\int\frac{1}{W}Wf\circ\sigma\,d\lambda=\int R\left(\frac{1}{W}\right)f\,d\lambda,\quad\forall f\in C\left(X\right);
\]
and the assertion follows.\end{proof}
\begin{cor}
Let $R$, $\lambda$, $W$ be as above, and assume that $\left\Vert R\left(W\right)\right\Vert _{\infty}\leq1$.
Let $h$ be a function on $X$ solving the equation 
\begin{equation}
Rh=h,\quad h\in L^{2}\left(\lambda\right),\mbox{ (\ensuremath{R}-harmonic)}\label{eq:m10}
\end{equation}
then the following implication holds:
\begin{equation}
h\left(x\right)\neq0\Longrightarrow R\left(W\right)\left(x\right)=1.\label{eq:m11}
\end{equation}
\end{cor}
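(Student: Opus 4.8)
The plan is to read (\ref{eq:m11}) as a rigidity statement forced by $R$ being a contraction on $L^2(\lambda)$. Under the standing hypothesis $\|R(W)\|_\infty \le 1$, the Corollary above together with (\ref{eq:m9}) shows that $R$ and $R^*$ extend to bounded operators on $L^2(\lambda)$ with $\|R\|_{2\to 2} = \|R^*\|_{2\to 2} = \|R(W)\|_\infty^{1/2} \le 1$; I will use this to justify all the inner-product manipulations below. I also record two positivity facts: since $\mu = \lambda\cdot R$ in (\ref{eq:m3}) is a positive measure, $W = d\mu/d\lambda \ge 0$, and since $R$ is a positive operator, $R(W) \ge 0$. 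Hence $0 \le R(W) \le 1$ $\lambda$-a.e., and the goal is to push $R(W)$ up to $1$ on the set $\{h \ne 0\}$.

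First I would estimate $\|h\|_2$ from below. Using $h = Rh$ and the definition of the adjoint, $\|h\|_2^2 = \langle Rh, h\rangle = \langle h, R^*h\rangle$, so Cauchy--Schwarz gives $\|h\|_2^2 \le \|h\|_2\,\|R^*h\|_2$ and therefore $\|h\|_2 \le \|R^*h\|_2$ (for $h \ne 0$). Since $\|R^*\|_{2\to 2} \le 1$ yields the reverse inequality $\|R^*h\|_2 \le \|h\|_2$, the two combine to the equality $\|R^*h\|_2 = \|h\|_2$. Next I would evaluate the left side using (\ref{eq:m8}), extended from $C(X)$ to $L^2(\lambda)$ by boundedness: $\|R^*h\|_2^2 = \langle RR^*h, h\rangle = \langle R(W)h, h\rangle = \int_X R(W)\,|h|^2\,d\lambda$. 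Equating this with $\|h\|_2^2 = \int_X |h|^2\,d\lambda$ gives $\int_X \bigl(1 - R(W)\bigr)|h|^2\,d\lambda = 0$.

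The conclusion is then immediate from positivity: the integrand $\bigl(1 - R(W)\bigr)|h|^2$ is a product of two nonnegative functions, so a vanishing integral forces $\bigl(1 - R(W)\bigr)|h|^2 = 0$ $\lambda$-a.e.; thus $R(W)(x) = 1$ wherever $h(x) \ne 0$, which is exactly (\ref{eq:m11}). I expect the only real work to be bookkeeping rather than insight: one must check that $h$ lies in the domain where $\langle Rh,h\rangle = \langle h, R^*h\rangle$ and $RR^*h = R(W)h$ are legitimate, and this is guaranteed precisely by the boundedness coming from $\|R(W)\|_\infty \le 1$. It is worth emphasizing that the bound $R(W) \le 1$ enters twice---once to obtain $\|R^*h\|_2 \le \|h\|_2$ and once to turn the integral identity into a pointwise statement---so the normalization in the hypothesis is doing the essential work, and the argument would collapse without it.
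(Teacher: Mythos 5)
Your proof is correct and takes essentially the same route as the paper's: both arguments rest on the contractivity $\Vert R\Vert_{2\rightarrow2}=\Vert R^{*}\Vert_{2\rightarrow2}\leq1$ coming from (\ref{eq:m9}) together with the identity $RR^{*}=$ multiplication by $R\left(W\right)$ from (\ref{eq:m8}). The paper simply invokes the standard fact that a fixed vector of a Hilbert-space contraction is also fixed by the adjoint, giving $R^{*}h=h$ and hence $h=RR^{*}h=R\left(W\right)h$; you unpack that same fact into the norm equality $\Vert R^{*}h\Vert_{2}=\Vert h\Vert_{2}$ and pass directly to the integral identity $\int_{X}\left(1-R\left(W\right)\right)\left|h\right|^{2}d\lambda=0$, arriving at the same conclusion (stated, more carefully than in the paper, as a $\lambda$-a.e.\ identity rather than a pointwise one).
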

\begin{proof}
By (\ref{eq:m9}), $R$ is contractive, i.e., $\left\Vert R\right\Vert _{2\rightarrow2}=\left\Vert R^{*}\right\Vert _{2\rightarrow2}\leq1$,
$\left\Vert Rf\right\Vert _{L^{2}\left(\lambda\right)}\leq\left\Vert f\right\Vert _{L^{2}\left(\lambda\right)}$;
and so $R^{*}h=h$; and, by (\ref{eq:m8}), 
\begin{equation}
h=h\,R\left(W\right),\;\mbox{pointwise},\label{eq:m12}
\end{equation}
i.e., $h\left(x\right)=h\left(x\right)R\left(W\right)\left(x\right)$,
for all $x\in X$, and (\ref{eq:m11}) follows.\end{proof}
\begin{cor}
Suppose $\lambda\in\mathscr{L}\left(R\right)$ with $R$, $\sigma$,
$W=d\mu/d\lambda$ satisfying the usual axioms, then $\lambda$ is
$\sigma$-invariant, i.e.,
\begin{align}
\int f\circ\sigma\,d\lambda & =\int f\,d\lambda,\quad\forall f\in C\left(X\right)\label{eq:w1}\\
 & \Updownarrow\nonumber \\
\frac{1}{W}\mbox{ exists, and } & R\left(\frac{1}{W}\right)=1\mbox{ on the support of \ensuremath{\lambda}.}\label{eq:w2}
\end{align}
 \end{cor}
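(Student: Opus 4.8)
The plan is to read off both implications from \corref{ls}, which carries the analytic weight. Recall that \corref{ls} provides, whenever $1/W$ is well-defined, the identity
\begin{equation}
\int_{X}f\circ\sigma\,d\lambda=\int_{X}f\,R\!\left(\tfrac{1}{W}\right)d\lambda,\qquad\forall f\in C\left(X\right),\label{eq:sigmainv-star}
\end{equation}
obtained from $\lambda\circ\sigma^{-1}\ll\lambda$ with density $R\left(1/W\right)$ together with $\int f\circ\sigma\,d\lambda=\int f\,d\lambda\circ\sigma^{-1}$. The $\sigma$-invariance condition (\ref{eq:w1}) is exactly the statement $\lambda\circ\sigma^{-1}=\lambda$, i.e.\ $\int f\circ\sigma\,d\lambda=\int f\,d\lambda$ for all $f\in C\left(X\right)$; thus both sides of the asserted equivalence amount to comparing the density $R\left(1/W\right)$ with the constant function $1$.

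For the implication (\ref{eq:w2})$\Rightarrow$(\ref{eq:w1}) I would argue directly: assuming $1/W$ exists and $R\left(1/W\right)=1$ on $\operatorname{supp}\lambda$, the right-hand side of (\ref{eq:sigmainv-star}) equals $\int f\,d\lambda$, since the value of the $\lambda$-integral only depends on $R\left(1/W\right)$ restricted to $\operatorname{supp}\lambda$, and this is precisely (\ref{eq:w1}). For the converse (\ref{eq:w1})$\Rightarrow$(\ref{eq:w2}), once $1/W$ is known to exist, combining (\ref{eq:sigmainv-star}) with (\ref{eq:w1}) gives $\int_{X}f\,\bigl(R(1/W)-1\bigr)\,d\lambda=0$ for all $f\in C\left(X\right)$. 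Since $\lambda$ is a finite Borel measure and $R(1/W)\in L^{1}\left(\lambda\right)$ (being the density of the finite measure $\lambda\circ\sigma^{-1}$), the finite signed measure $\bigl(R(1/W)-1\bigr)\,d\lambda$ annihilates $C\left(X\right)$ and hence vanishes by uniqueness in the Riesz representation theorem; therefore $R(1/W)=1$ $\lambda$-a.e., i.e.\ on $\operatorname{supp}\lambda$.

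The step I expect to be the genuine obstacle is establishing, in the forward direction, that $1/W$ is well-defined at all, i.e.\ that $W>0$ $\lambda$-almost everywhere, which is what makes (\ref{eq:sigmainv-star}) available. The natural route is to put $A=\{W=0\}$ and show $\lambda\left(A\right)=0$: one has $\mu\left(A\right)=\int_{A}W\,d\lambda=0$, hence $\int_{X}P\left(A\mid x\right)d\lambda\left(x\right)=0$ by (\ref{eq:c1}), so $P\left(A\mid x\right)=0$ for $\lambda$-a.e.\ $x$; and since the conditional measures $P\left(\cdot\mid x\right)$ are carried by the fibre $\sigma^{-1}\left(x\right)$ (a consequence of (\ref{eq:m5})), one would then try to transport this fibrewise vanishing back to $\lambda\left(A\right)$ using the surjectivity of $\sigma$ together with the invariance (\ref{eq:w1}). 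I would flag that this transport step seems to require the nondegeneracy already implicit in the standing hypotheses, and that in the cleanest reading the existence of $1/W$ is best regarded as inherited from \corref{ls}, the substantive content of the corollary being the equivalence of the pointwise normalization $R(1/W)=1$ on $\operatorname{supp}\lambda$ with $\sigma$-invariance of $\lambda$.
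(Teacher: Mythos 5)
Your proposal is correct and takes essentially the same route as the paper: both implications are read off from \corref{ls}, and your reverse direction is exactly the paper's computation $\int f\circ\sigma\,d\lambda=\int\frac{1}{W}\left(Wf\circ\sigma\right)d\lambda=\int R\left(\frac{1}{W}\right)f\,d\lambda=\int f\,d\lambda$. The issue you flag about establishing that $1/W$ exists in the forward direction is real but is not one you introduced --- the paper simply writes that this direction ``follows from \corref{ls}'' and, like you, in effect treats the well-definedness of $1/W$ as part of the standing hypotheses.
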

\begin{proof}
(\ref{eq:w1})$\Rightarrow$(\ref{eq:w2}) follows from \corref{ls}.
(Also see \corref{Rext}.)

Assume (\ref{eq:w2}), then 
\[
\mbox{LHS}_{\left(\ref{eq:w1}\right)}=\int\frac{1}{W}\underset{\text{\ensuremath{R^{*}f}}}{\underbrace{Wf\circ\sigma}}d\lambda=\int R\left(\frac{1}{W}\right)f\,d\lambda=\int f\,d\lambda,\quad\forall f\in C\left(X\right).
\]
\end{proof}
\begin{cor}
Let $X$, $R$, $\lambda$, $W$, $\sigma$ be as specified above.
Recall that $f\geq0$ $\Longrightarrow$ $Rf\geq0$, and $R\left(\left(f\circ\sigma\right)g\right)=fR\left(g\right)$,
$\forall f,g\in C\left(X\right)$. Assume further that $W\in L^{2}\left(\lambda\right)$,
then 
\begin{align}
\int_{X}\left|W\right|^{2}f\circ\sigma\,d\lambda & =0\mbox{, for some \ensuremath{f\in C\left(X\right)}}\label{eq:w3}\\
 & \Updownarrow\nonumber \\
\int_{X}f\,R\left(W\right)d\lambda & =0.\label{eq:w4}
\end{align}
\end{cor}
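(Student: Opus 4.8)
The plan is to prove something slightly stronger than the stated biconditional: the two integrals in \eqref{eq:w3} and \eqref{eq:w4} are in fact \emph{equal}, so the equivalence is automatic. Concretely, the target identity is
\[
\int_{X}\left|W\right|^{2}\left(f\circ\sigma\right)d\lambda=\int_{X}f\,R\left(W\right)d\lambda,\qquad\forall f\in C\left(X\right).
\]
Since $W=d\mu/d\lambda$ is a genuine Radon--Nikodym derivative it is real and nonnegative $\lambda$-a.e., hence $\left|W\right|^{2}=W^{2}=W\cdot W$, which is what lets the two sides be linked through a single application of the adjoint relation.

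The quickest route is to read the left-hand side as the pairing \eqref{eq:m7} evaluated at $g=W$. Indeed \eqref{eq:m7} reads $\int_{X}\left(Wf\circ\sigma\right)g\,d\lambda=\int_{X}fR\left(g\right)d\lambda$, and putting $g=W$ gives exactly $\int_{X}W^{2}\left(f\circ\sigma\right)d\lambda=\int_{X}fR\left(W\right)d\lambda$. I would also spell out the same identity directly from the two axioms, which is more transparent: first apply the module axiom \eqref{eq:m5} with the fixed $f$ and with $g$ replaced by $W$, obtaining the pointwise relation $R\left(\left(f\circ\sigma\right)W\right)=f\,R\left(W\right)$; then integrate against $\lambda$ and invoke the defining property \eqref{eq:m4} of $W$ in the extended form $\int_{X}R\left(h\right)d\lambda=\int_{X}hW\,d\lambda$ with $h=\left(f\circ\sigma\right)W$. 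This yields $\int_{X}fR\left(W\right)d\lambda=\int_{X}\left(f\circ\sigma\right)W\cdot W\,d\lambda=\int_{X}\left(f\circ\sigma\right)W^{2}d\lambda$, which is the claimed equality. Once the equality is in hand, \eqref{eq:w3} and \eqref{eq:w4} are simply the two descriptions of one and the same number, so each vanishes precisely when the other does.

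The single genuine obstacle is that both \eqref{eq:m5} and \eqref{eq:m7} are stated for continuous arguments, whereas here the relevant argument $g=W$ is only assumed to lie in $L^{2}\left(\lambda\right)$. I would dispose of this by passing to the extended operator $\widetilde{R}$ of Corollary~\ref{cor:Rext}. The module identity survives the extension at the level of the conditional measures $P\left(\cdot\mid x\right)$: the lemma producing \eqref{eq:m51} shows that $f\circ\sigma=f\left(x\right)$ holds $P\left(\cdot\mid x\right)$-a.e., whence $\widetilde{R}\left(\left(f\circ\sigma\right)W\right)\left(x\right)=f\left(x\right)\widetilde{R}\left(W\right)\left(x\right)$ for measurable $W$. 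For the integration step, $f$ is bounded and $W\in L^{2}\left(\lambda\right)$, so $\left(f\circ\sigma\right)W$ is $\lambda$-integrable and the extension of \eqref{eq:m4} to measurable integrands applies; a routine approximation of $W$ by continuous functions together with positivity of $R$ and dominated convergence makes every exchange of $R$ with integration legitimate. With these justifications the equality above is rigorous for all $f\in C\left(X\right)$, and the equivalence $\eqref{eq:w3}\Leftrightarrow\eqref{eq:w4}$ follows at once.
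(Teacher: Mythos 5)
Your proof is correct and follows essentially the same route as the paper's: both establish the stronger fact that the two integrals coincide by pairing $W f\circ\sigma$ against $W$ and invoking the adjoint relation $\left\langle R^{*}f,W\right\rangle =\left\langle f,R\left(W\right)\right\rangle$ (the paper via the inner-product computation, you via substituting $g=W$ in \eqref{eq:m7}). Your extra care about extending \eqref{eq:m5} and \eqref{eq:m7} from continuous $g$ to $W\in L^{2}\left(\lambda\right)$ is a legitimate refinement of a point the paper passes over silently, but it does not change the argument.
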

\begin{proof}
Use that $L^{2}\left(\lambda\right)\ni f\xrightarrow{\;R^{*}\;}Wf\circ\sigma\in L^{2}\left(\lambda\right)$,
we conclude that 
\begin{align}
\int_{X}\left|W\right|^{2}f\circ\sigma\,d\lambda & =\int_{X}Wf\circ\sigma\overline{W}\,d\lambda=\left\langle R^{*}f,W\right\rangle _{L^{2}\left(\lambda\right)}\label{eq:w31}\\
 & =\left\langle f,R\left(W\right)\right\rangle _{L^{2}\left(\lambda\right)}=\int_{X}f\left(x\right)R(\overline{W})\left(x\right)d\lambda\left(x\right).\nonumber 
\end{align}
\end{proof}
\begin{cor}
Let $X$, $R$, $\lambda$, $W$, $\sigma$ be as above, and let $E\subset X$
be a Borel set; then 
\begin{equation}
\int_{\sigma^{-1}\left(E\right)}\left|W\right|^{2}d\lambda=\int_{E}R\left(W\right)d\lambda,\label{eq:w5}
\end{equation}
and so in particular, $R\left(W\right)\geq0$ a.e. on $X$ w.r.t.
$\lambda$. \end{cor}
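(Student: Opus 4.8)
The plan is to specialize the identity already extracted inside the proof of the preceding corollary to indicator functions, and then read off nonnegativity. Recall first that $W=d\mu/d\lambda$ is the Radon--Nikodym derivative of the positive measure $\mu=\lambda\cdot R$ with respect to the positive measure $\lambda$, so $W\geq0$ $\lambda$-a.e.; in particular $\overline{W}=W$ and $R(\overline{W})=R(W)$. With this, the chain of equalities (\ref{eq:w31}) reduces to the single identity
\[
\int_{X}|W|^{2}\,(f\circ\sigma)\,d\lambda=\int_{X}f\,R(W)\,d\lambda,\qquad\forall f\in C(X).
\]
The corollary is then exactly this identity evaluated at $f=\chi_{E}$: since $\chi_{E}\circ\sigma=\chi_{\sigma^{-1}(E)}$, the left-hand side becomes $\int_{\sigma^{-1}(E)}|W|^{2}\,d\lambda$ and the right-hand side becomes $\int_{E}R(W)\,d\lambda$, which is (\ref{eq:w5}).

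The only step needing real justification is that the identity, valid a priori only for continuous $f$, survives the substitution $f=\chi_{E}$ for an arbitrary Borel set $E$. I would recast both sides as two finite Borel measures on $X$, evaluated on $E$. On the left sits the pushforward $\nu_{1}:=\sigma_{\ast}\bigl(|W|^{2}\lambda\bigr)$, given by $\nu_{1}(E)=\int_{\sigma^{-1}(E)}|W|^{2}\,d\lambda$; this is well-defined and countably additive because $\sigma$ is measurable and preimage commutes with countable Boolean operations, and it is finite under the standing hypothesis $W\in L^{2}(\lambda)$. On the right sits $\nu_{2}:=R(W)\,d\lambda$, with $R(W)$ computed through the extension $\widetilde{R}$ of \corref{Rext}. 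The displayed identity says precisely that $\int_{X}f\,d\nu_{1}=\int_{X}f\,d\nu_{2}$ for every $f\in C(X)$. Because $X$ is compact Hausdorff and $C(X)$ separates Borel measures via Riesz representation, this forces $\nu_{1}=\nu_{2}$, hence $\nu_{1}(E)=\nu_{2}(E)$ for all Borel $E$. Equivalently, one could approximate $\chi_{E}$ by continuous functions and pass to the limit by dominated convergence, using $|W|^{2}\in L^{1}(\lambda)$ as dominating control.

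The nonnegativity assertion is then immediate. The left-hand side of (\ref{eq:w5}) is an integral of the nonnegative function $|W|^{2}$, hence $\geq0$ for every Borel $E$; therefore $\int_{E}R(W)\,d\lambda\geq0$ for all $E\in\mathscr{B}(X)$. Choosing $E=\{x\in X:R(W)(x)<0\}$ would, if $\lambda(E)>0$, yield a strictly negative integral, a contradiction; hence $\lambda(E)=0$, i.e.\ $R(W)\geq0$ $\lambda$-a.e.

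I expect the measure-extension step---passing from continuous test functions to $\chi_{E}$---to be the only genuine obstacle, and it is mild; the remainder is direct substitution together with the standard principle that a function integrating nonnegatively over every measurable set is nonnegative a.e.
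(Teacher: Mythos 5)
Your proposal is correct and follows essentially the same route as the paper: the paper's proof likewise approximates $\chi_{E}$ by continuous functions and invokes the identity (\ref{eq:w31}) to obtain (\ref{eq:w5}). You merely spell out more carefully the approximation step (via Riesz representation or dominated convergence) and the final ``nonnegative integral over every Borel set implies nonnegative a.e.'' argument, both of which the paper leaves implicit.
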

\begin{proof}
Approximate $\chi_{E}$ with $f\in C\left(X\right)$ and use (\ref{eq:w31}),
we have 
\[
\int\left|W\right|^{2}f\circ\sigma\,d\lambda=\int R\left(W\right)f\,d\lambda,
\]
which is (\ref{eq:w5}).\end{proof}
\begin{example}
\label{exa:cc1} Let $X=\left[0,1\right]=\mathbb{R}/\mathbb{Z}$,
and $d\lambda=dx=$ Lebesgue measure. Fix $W>0$, a positive function
over $\left[0,1\right]$, and set 
\begin{equation}
\left(Rh\right)\left(x\right)=\frac{1}{2}\left(W\left(\frac{x}{2}\right)h\left(\frac{x}{2}\right)+W\left(\frac{x+1}{2}\right)h\left(\frac{x+1}{2}\right)\right).\label{eq:cc2}
\end{equation}
Let $\sigma\left(x\right)=2x\mod1$, $x\in X$, then
\begin{equation}
\int_{0}^{1}g\left(x\right)\left(Rh\right)\left(x\right)dx=\int_{0}^{1}W\left(x\right)g\left(\sigma\left(x\right)\right)h\left(x\right)dx,\quad\forall f,g\in C\left(X\right).\label{eq:cc3}
\end{equation}
\end{example}
\begin{proof}
We introduce the mappings $\tau_{0}$ and $\tau_{1}$, as in Fig \ref{fig:sigma}-\ref{fig:tau},
so that $\sigma\left(\tau_{i}\left(x\right)\right)=x$, for all $x\in X$,
$i=0,1$. One checks that 
\begin{equation}
R\left(\left(g\circ\sigma\right)h\right)\left(x\right)=g\left(x\right)\left(Rh\right)\left(x\right).\label{eq:cc4}
\end{equation}
Note that $\lambda\in\mathscr{L}\left(R\right)$. Indeed, we have
\begin{eqnarray*}
\lambda\left(Rh\right) & = & \int_{0}^{1}\left(Rh\right)\left(x\right)dx\\
 & \underset{\text{by \ensuremath{\left(\ref{eq:cc2}\right)}}}{=} & \int_{0}^{1}\frac{1}{2}\left(\left(Wh\right)\left(\frac{x}{2}\right)+\left(Wh\right)\left(\frac{x+1}{2}\right)\right)dx\\
 & = & \int_{0}^{1}\left(Wh\right)\left(x\right)dx=\int_{0}^{1}h\left(x\right)W\left(x\right)dx,
\end{eqnarray*}
and so $\frac{d\mu}{d\lambda}\left(x\right)=W\left(x\right)$, where
$\mu=\lambda\cdot R$. \end{proof}
\begin{example}
\label{exa:R1}Let $R$ be as in (\ref{eq:cc2}), and let $h$ be
an $R$-harmonic function, i.e., 
\begin{equation}
\left(Rh\right)\left(x\right)=\frac{1}{2}\left(\left(Wh\right)\left(\frac{x}{2}\right)+\left(Wh\right)\left(\frac{x+1}{2}\right)\right)=h\left(x\right),\quad x\in X=\left[0,1\right].\label{eq:cc5}
\end{equation}
Setting $\widehat{h}\left(n\right)=\int_{0}^{1}e\left(nx\right)h\left(x\right)dx$,
with $e\left(nx\right):=e^{i2\pi nx}$, it follows from (\ref{eq:cc5})
that 
\begin{align*}
\widehat{h}\left(n\right) & =\int_{0}^{1}e\left(nx\right)\left(Rh\right)\left(x\right)dx\\
 & =\int_{0}^{1}W\left(x\right)e\left(2nx\right)h\left(x\right)dx=\left(Wh\right)^{\wedge}\left(2n\right),\quad\forall n\in\mathbb{Z}.
\end{align*}
An iteration gives 
\begin{align*}
\widehat{h}\left(n\right) & =\int_{0}^{1}W\left(x\right)e\left(2nx\right)\left(Rh\right)\left(x\right)dx\\
 & =\int_{0}^{1}W\left(x\right)W\left(2x\right)e\left(2^{2}nx\right)h\left(x\right)dx\\
 & \cdots\\
 & =\int_{0}^{1}W\left(x\right)W\left(2x\right)\cdots W\left(2^{k-1}x\right)e\left(2^{k}nx\right)h\left(x\right)dx,
\end{align*}
and so 
\[
\widehat{h}\left(n\right)=\left(W_{k}h\right)^{\wedge}\left(2^{k}n\right),\quad\forall n\in\mathbb{Z},\:\forall k=0,1,2,\cdots;
\]
where $W_{k}\left(x\right):=W\left(x\right)W\left(2x\right)\cdots W\left(2^{k-1}x\right)$. 
\end{example}
\begin{figure}[H]
\includegraphics[width=0.3\textwidth]{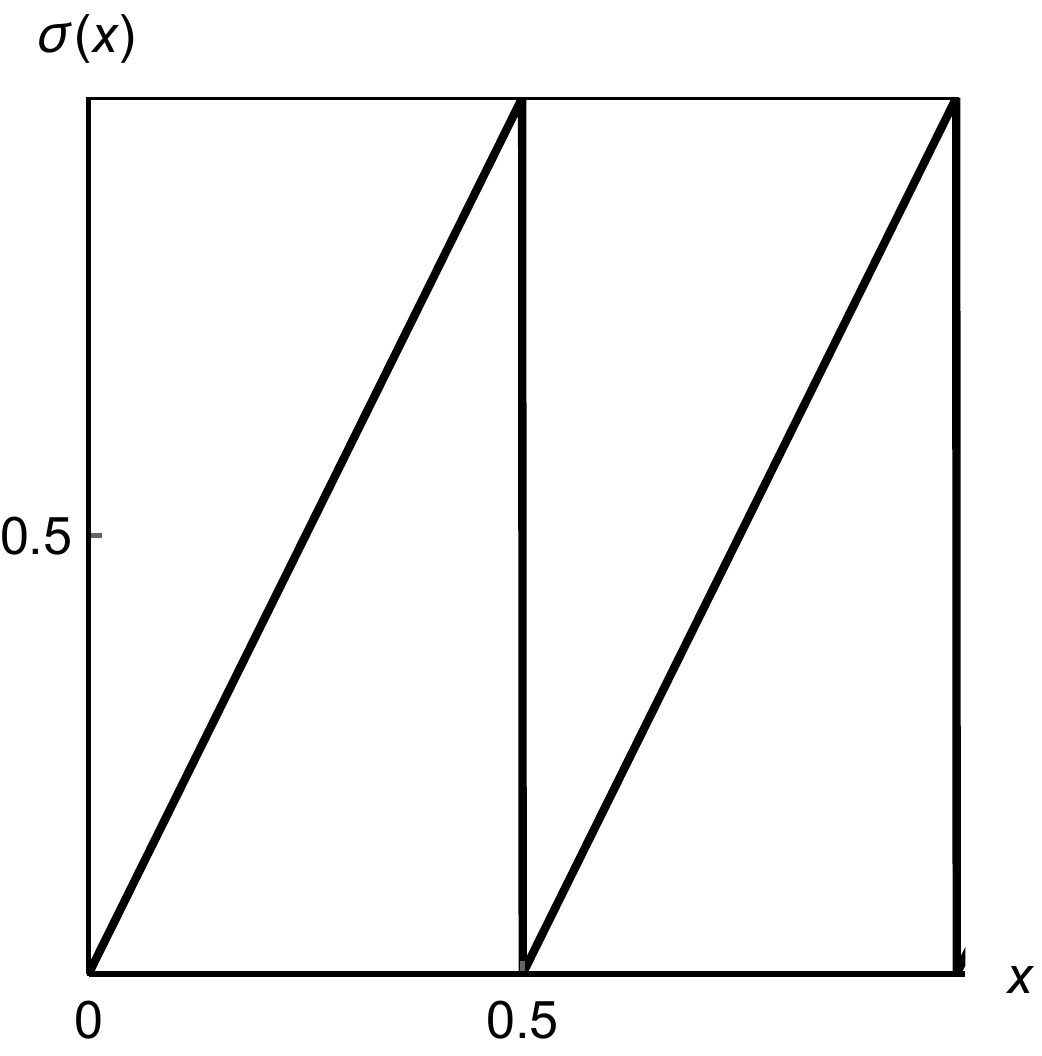}

\caption{\label{fig:sigma}$\sigma\left(x\right)=2x\mod1$}

\end{figure}

\begin{figure}[H]
\subfloat[]{\includegraphics[width=0.3\textwidth]{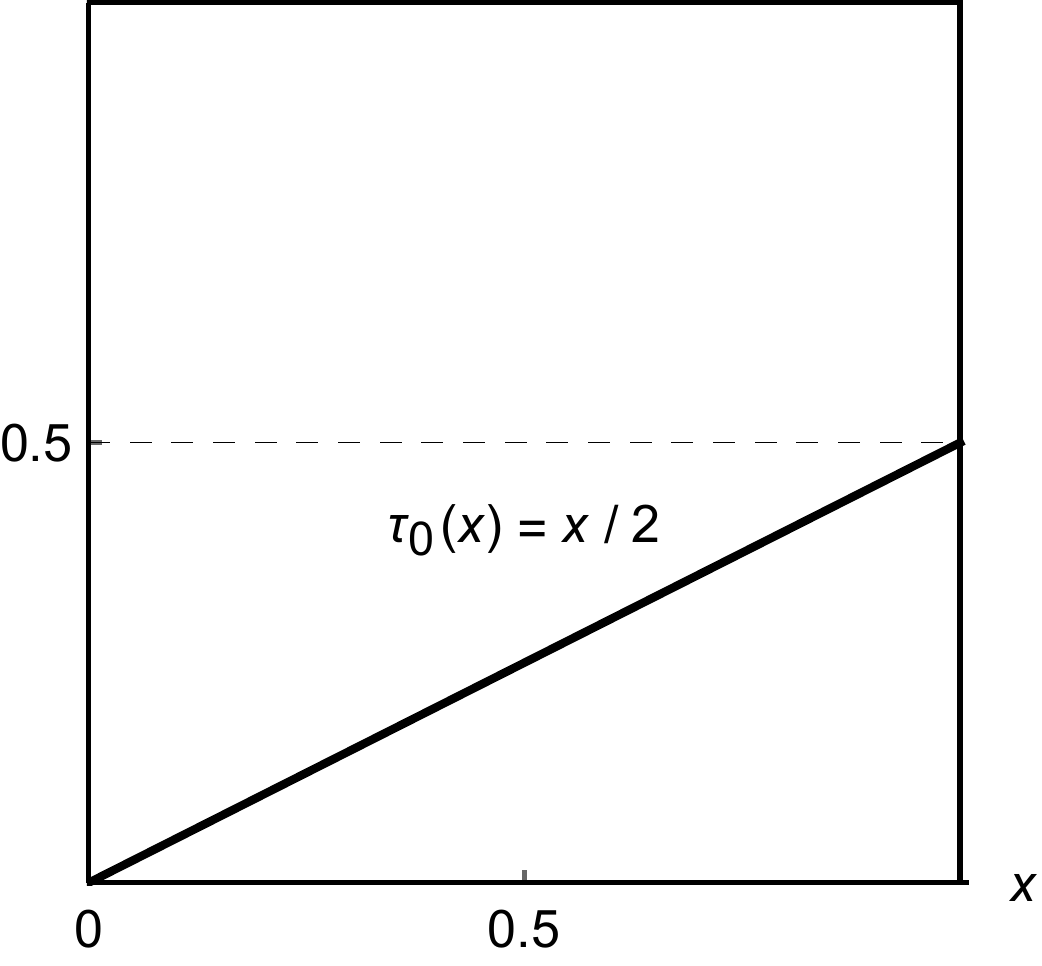}

}\hspace{1.5cm}\subfloat[]{\includegraphics[width=0.3\textwidth]{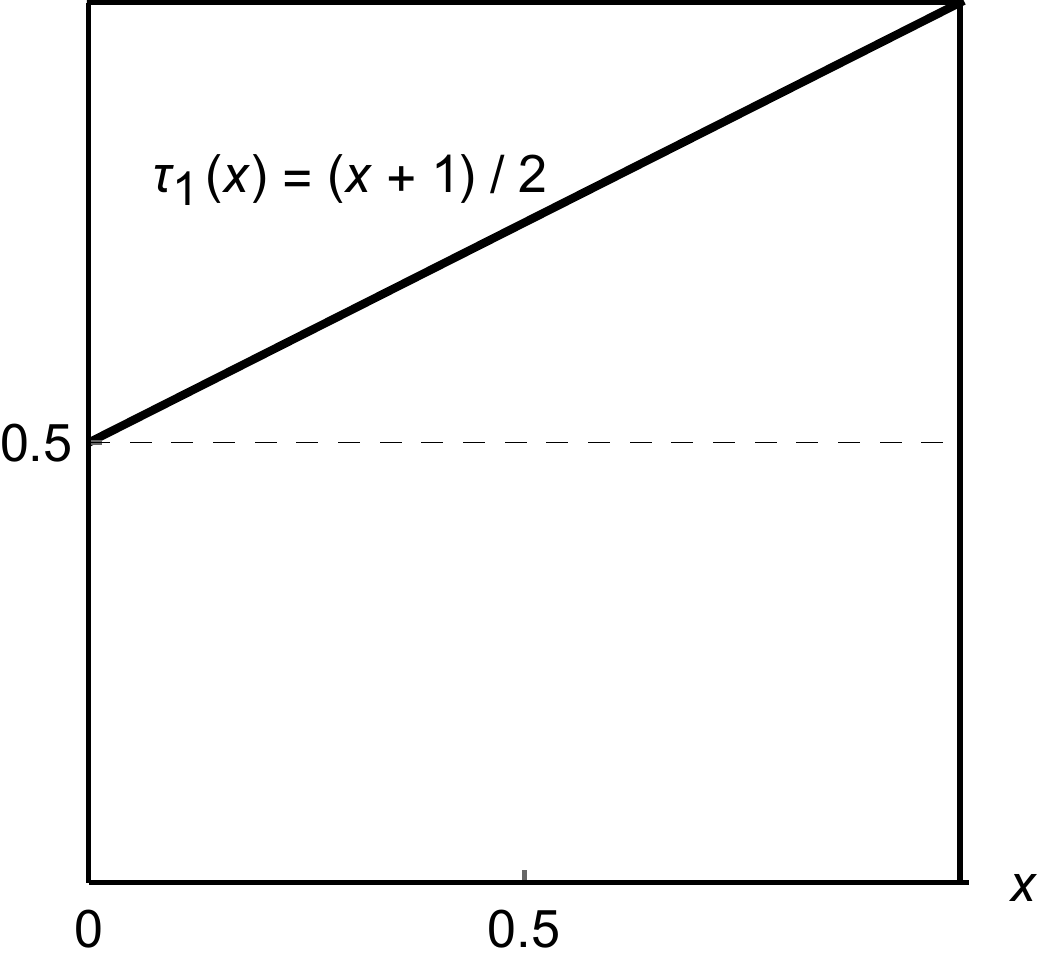}

}

\caption{\label{fig:tau}$\tau_{0}=\frac{x}{2}$, $\tau_{1}=\frac{x+1}{2}$}
\end{figure}

\section{\label{sec:IFS}Iterated Function Systems: The General Case}

In this section we discuss a subfamily of systems where the generalized
transfer operator $R$ is associated with an Iterated Function system
(IFS).

Let $X$ be a compact Hausdorff space, $n\in\mathbb{N}$, and let
\begin{equation}
\tau_{i}:X\longrightarrow X,\quad1\leq i\leq n\label{eq:f1}
\end{equation}
be a system of endomorphisms. Let 
\begin{equation}
p_{i}>0,\;s.t.\;\sum_{i=1}^{n}p_{i}=1.\label{eq:f2}
\end{equation}
Following \cite{MR625600,MR2869044,MR2560042,MR2529881,MR2525528,MR2520021},
we say that (\ref{eq:f1})-(\ref{eq:f2}) is an \emph{Iterated Function
System }(IFS) if there is a Borel probability measure $\lambda$ on
$X$ such that 
\begin{equation}
\sum_{i=1}^{n}p_{i}\int_{X}f\left(\tau_{i}\left(x\right)\right)d\lambda\left(x\right)=\int_{X}f\left(x\right)d\lambda\left(x\right)\label{eq:f3}
\end{equation}
holds for all $f\in C\left(X\right)$. Note that (\ref{eq:f3}) may
also be expressed as follows:
\begin{equation}
\sum_{i}p_{i}\,\lambda\circ\tau_{i}^{-1}=\lambda.\label{eq:f4}
\end{equation}
The measure $\lambda$ is called an IFS measure. 

Let $W\in L^{1}\left(\lambda\right)$, $W\geq0$, and set 
\begin{equation}
\left(R_{W}f\right)\left(x\right)=\sum_{i=1}^{n}p_{i}\left(Wf\right)\left(\tau_{i}\left(x\right)\right),\quad x\in X,\:f\in C\left(X\right),\label{eq:f5}
\end{equation}
where $\left(Wf\right)\left(\tau_{i}\left(x\right)\right):=W\left(\tau_{i}\left(x\right)\right)f\left(\tau_{i}\left(x\right)\right)$. 
\begin{lem}
If $W$ is as above, and if $\lambda$ is an IFS measure, then $\lambda\in\mathscr{L}_{1}\left(R_{W}\right)$,
see \remref{L1}. \end{lem}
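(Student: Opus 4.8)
The plan is to verify the three conditions that define membership in $\mathscr{L}_1\left(R_W\right)$ (see \remref{L1} and \defref{R}): the normalization $\lambda\left(X\right)=1$, the integrability $R_W\left(C\left(X\right)\right)\subset L^1\left(\lambda\right)$ of (\ref{eq:m1}), and the absolute continuity $\lambda\cdot R_W\ll\lambda$ of (\ref{eq:m2}). The normalization is free: an IFS measure is by definition a Borel probability measure, so $\lambda\left(X\right)=1$.

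The engine of the proof is the IFS invariance in its measure form (\ref{eq:f4}), $\sum_i p_i\,\lambda\circ\tau_i^{-1}=\lambda$. This is equivalent to (\ref{eq:f3})---two finite Borel (Radon) measures on the compact space $X$ that agree on $C\left(X\right)$ must coincide---and, once promoted to an identity of measures, it upgrades by monotone convergence to
\begin{equation*}
\sum_{i=1}^n p_i\int_X g\left(\tau_i\left(x\right)\right)d\lambda\left(x\right)=\int_X g\,d\lambda
\end{equation*}
for every nonnegative Borel function $g$, both sides possibly infinite. I would apply this first to $g=W\left|f\right|$, where $f\in C\left(X\right)$ and $W\geq0$: since $X$ is compact, $\left|f\right|\leq\left\Vert f\right\Vert_\infty$, and since $W\in L^1\left(\lambda\right)$ the right-hand side is at most $\left\Vert f\right\Vert_\infty\left\Vert W\right\Vert_{L^1\left(\lambda\right)}<\infty$. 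This simultaneously bounds $\int_X\left|R_Wf\right|d\lambda$ by $\left\Vert f\right\Vert_\infty\left\Vert W\right\Vert_{L^1\left(\lambda\right)}$, giving (\ref{eq:m1}), and shows that each composition $\left(Wf\right)\circ\tau_i$ is $\lambda$-integrable.

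With integrability in hand, I would run the same invariance on $g=Wf_+$ and $g=Wf_-$ separately and subtract, obtaining the signed identity
\begin{equation*}
\int_X\left(R_Wf\right)d\lambda=\sum_{i=1}^n p_i\int_X\left(Wf\right)\left(\tau_i\left(x\right)\right)d\lambda\left(x\right)=\int_X f\,W\,d\lambda,\quad\forall f\in C\left(X\right).
\end{equation*}
Comparing with the definition (\ref{eq:m4}) of $\mu=\lambda\cdot R_W$, this reads $\mu\left(f\right)=\int_X fW\,d\lambda$, i.e.\ $d\mu=W\,d\lambda$; hence $\mu\ll\lambda$ with $d\mu/d\lambda=W\in L^1\left(\lambda\right)$, which is (\ref{eq:m2}). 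Combined with the normalization this places $\lambda$ in $\mathscr{L}_1\left(R_W\right)$, and as a bonus it pins down the Radon-Nikodym derivative $W$ of (\ref{eq:m3}) for this model.

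The one point that requires genuine care---and the step I would treat as the main obstacle---is the transition from the invariance hypothesis (\ref{eq:f3}), posited only for continuous integrands, to its use on the merely integrable function $Wf$. Everything hinges on first recasting (\ref{eq:f3}) as the measure identity (\ref{eq:f4}) and then extending by monotone convergence; once that is done, the remaining estimates are routine. It is precisely here that the nonnegativity $W\geq0$ and the global integrability $W\in L^1\left(\lambda\right)$ earn their keep: the former licenses the monotone-convergence extension and the control of $\left|R_Wf\right|$, while the latter supplies the finiteness that makes the signed identity legitimate.
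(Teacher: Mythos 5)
Your proof is correct and follows essentially the same route as the paper: both establish $\int_X (R_W f)\,d\lambda=\int_X fW\,d\lambda$ for $f\in C(X)$ by invoking the IFS invariance on the integrand $Wf$, identifying $W$ as the Radon--Nikodym derivative of $\lambda\cdot R_W$. The one difference is that you explicitly justify applying the invariance (\ref{eq:f3}), stated only for continuous functions, to the merely integrable $Wf$ --- by passing to the measure identity (\ref{eq:f4}) and extending via monotone convergence --- a step the paper's proof takes for granted; this is a worthwhile clarification rather than a different argument.
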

\begin{proof}
We establish the conclusion by verifying that, under the assumptions,
we have 
\begin{equation}
\int_{X}\left(R_{W}f\right)\left(x\right)d\lambda\left(x\right)=\int_{X}W\left(x\right)f\left(x\right)d\lambda\left(x\right),\quad\forall f\in C\left(X\right),\label{eq:f6}
\end{equation}
i.e., $W$ is the Radon-Nikodym derivative, $d\mu_{W}/d\lambda=W$,
where $\mu_{W}=\lambda\cdot R_{W}$. Indeed, 
\begin{eqnarray*}
\mbox{LHS}_{\left(\ref{eq:f6}\right)} & \underset{\text{by \ensuremath{\left(\ref{eq:f5}\right)}}}{=} & \sum_{i=1}^{n}p_{i}\int_{X}\left(Wf\right)\left(\tau_{i}\left(x\right)\right)d\lambda\left(x\right)\\
 & \underset{\text{by \ensuremath{\left(\ref{eq:f3}\right)}}}{=} & \int_{X}\left(Wf\right)\left(x\right)d\lambda\left(x\right)=\mbox{RHS}_{\left(\ref{eq:f6}\right).}
\end{eqnarray*}
\end{proof}
\begin{rem}
The setting of \exaref{R1}, we have an IFS corresponding to the two
mappings in \figref{tau}, and, in this setting, the corresponding
IFS measure $\lambda$ on the unit interval $X=\left[0,1\right]$
can then easily be checked to be the restriction to $\left[0,1\right]$
of the standard Lebesgue measure. It is important to mention that
there is a rich literature on IFS measures, see e.g., \cite{MR625600,MR2869044,MR2560042,MR2529881,MR2525528,MR2520021},
and the variety of IFS measures associated to function systems includes
explicit classes measures of fractal dimension. 
\end{rem}

\section{\label{sec:L1}The Set $\mathscr{L}_{1}\left(R\right)$ from a Quadratic
Estimate}

In order to build a path-space probability space from a given generalized
transfer operator $R$, a certain spectral property for $R$ must
be satisfied, and we discuss this below; see \thmref{mc}. The statement
of the problem requires the introduction of a Hilbert space of sigma
functions, also called square densities.

Let $X$ be a locally compact Hausdorff space, and let $R:C\left(X\right)\longrightarrow\mathcal{M}\left(X\right)$
be given, subject to the conditions in \defref{R} and \remref{L1}.

For every probability measure $\lambda$ on $X$, we apply the Radon-Nikodym
decomposition (see \cite{MR924157}) to the measure $\lambda R$,
getting 
\begin{equation}
\lambda R=\mu_{abs}+\mu_{sing}\label{eq:L1}
\end{equation}
where the two terms on the RHS in (\ref{eq:L1}) are absolutely continuous
w.r.t $\lambda$, respectively, with $\mu_{sing}$ and $\lambda$
mutually singular. Hence there is a positive $W_{\lambda}\in\mathscr{L}_{1}\left(\lambda\right)$
such that $\mu_{abs}=W_{\lambda}d\lambda$. When $\lambda$ is fixed,
set 
\begin{equation}
\tau=\frac{1}{2}\left(\mu_{abs}+\lambda R\right).\label{eq:L2}
\end{equation}
We have the following:
\begin{thm}
\label{thm:mc}Let $\left(X,R\right)$ be as described above, and
let $Prob\left(X\right)$ be the convex set of all probability measures
on $X$. For $\lambda\in Prob\left(X\right)$, let $\tau$ be the
corresponding measure given by (\ref{eq:L2}). Then $\mathscr{L}_{1}\left(R\right)\neq0$
if and only if 
\begin{equation}
\inf_{\lambda\in Prob\left(X\right)}\int_{X}\left|\sqrt{\frac{d\left(\lambda R\right)}{d\tau}}-W_{\lambda}\sqrt{\frac{d\lambda}{d\tau}}\right|^{2}d\tau=0.\label{eq:L3}
\end{equation}
\end{thm}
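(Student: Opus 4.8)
The plan is to read the left-hand side of (\ref{eq:L3}) as a single nonnegative functional $Q(\lambda)$ on $Prob(X)$ and to identify its value with the total mass of the singular component $\mu_{sing}$ in the Lebesgue decomposition (\ref{eq:L1}) of $\lambda R$. Granting $Q(\lambda)=\mu_{sing}(X)$, the vanishing of the integrand is exactly $\lambda R\ll\lambda$, i.e. $\lambda\in\mathscr{L}_1(R)$, and the theorem reduces to the claim that the infimum of $Q$ equals $0$ precisely when $Q$ attains $0$. The first implication of that reformulation is trivial; the reverse one is the content, and I would extract it from weak-$\ast$ compactness of $Prob(X)$ together with lower semicontinuity of $Q$.

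First I would evaluate $Q(\lambda)$ for fixed $\lambda$. Choose a Borel set $A$ with $\lambda(A)=0$ on which $\mu_{sing}$ is concentrated, so that $\mu_{abs}$ is carried by $X\setminus A$. From $\tau=\frac12(\mu_{abs}+\lambda R)=\mu_{abs}+\frac12\mu_{sing}$ one reads $\frac{d(\lambda R)}{d\tau}=1$ on $X\setminus A$ and $=2$ on $A$, while $\mu_{abs}=W_\lambda\,d\lambda$ gives $\frac{d\mu_{abs}}{d\tau}=1$ on $X\setminus A$ and $=0$ on $A$; since $\sqrt{d\mu_{abs}/d\tau}=\sqrt{W_\lambda}\,\sqrt{d\lambda/d\tau}$, the bracket in (\ref{eq:L3}) is the Hellinger integrand comparing $\lambda R$ with its absolutely continuous part. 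Substituting the density values shows the integrand is $0$ on $X\setminus A$ and constant on $A$; integrating against $\tau=\frac12\mu_{sing}$ on $A$ returns $\mu_{sing}(A)=\mu_{sing}(X)$. Equivalently, with the affinity $\mathcal A(\mu,\nu)=\int\sqrt{(d\mu/d\tau)(d\nu/d\tau)}\,d\tau$ one has $\mathcal A(\lambda R,\mu_{abs})=\|\mu_{abs}\|$, whence
\[ Q(\lambda)=\|\lambda R\|+\|\mu_{abs}\|-2\mathcal A(\lambda R,\mu_{abs})=\|\lambda R\|-\|\mu_{abs}\|=\mu_{sing}(X). \]
Thus $Q(\lambda)=0$ if and only if $\mu_{sing}=0$, i.e. $\lambda\in\mathscr{L}_1(R)$.

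Given $Q(\lambda)=\mu_{sing}(X)$, the implication $\mathscr{L}_1(R)\neq\emptyset\Rightarrow\inf_\lambda Q(\lambda)=0$ is immediate, since any $\lambda_0\in\mathscr{L}_1(R)$ has $\lambda_0 R\ll\lambda_0$ and hence $Q(\lambda_0)=0$. For the converse I would take a minimizing sequence $\lambda_n\in Prob(X)$ with $Q(\lambda_n)\to0$. Assuming $X$ compact (as in Section \ref{sec:setting}), $Prob(X)$ is weak-$\ast$ compact, so a subsequence converges weak-$\ast$ to some $\lambda_\infty\in Prob(X)$. If $Q$ is weak-$\ast$ lower semicontinuous, then $Q(\lambda_\infty)\le\liminf_n Q(\lambda_n)=0$, which forces $\mu_{sing}(\lambda_\infty)=0$, i.e. $\lambda_\infty R\ll\lambda_\infty$ and $\lambda_\infty\in\mathscr{L}_1(R)$, as desired.

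The hard part is precisely this lower semicontinuity, equivalently the stability of the relation $\lambda R\ll\lambda$ under weak-$\ast$ limits. The natural tool is a dual description of the masses: for $M>0$ the measure infimum satisfies $\|(\lambda R)\wedge(M\lambda)\|=\frac12\big(\|\lambda R\|+M-\|\lambda R-M\lambda\|_{TV}\big)$ and $\|\mu_{abs}\|=\sup_{M>0}\|(\lambda R)\wedge(M\lambda)\|$, while $\|\lambda R-M\lambda\|_{TV}=\sup\{\int(Rg-Mg)\,d\lambda:g\in C(X),\ |g|\le1\}$ by (\ref{eq:m4}). When $R$ carries $C(X)$ into $C(X)$ the functionals $\lambda\mapsto\int(Rg-Mg)\,d\lambda$ are weak-$\ast$ continuous, and the semicontinuity of $Q=\|\lambda R\|-\|\mu_{abs}\|$ can be assembled from these representations. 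The points demanding care — and where the argument can fail — are that the axioms only furnish $Rf\in\mathcal M(X)$ rather than $Rf\in C(X)$, so weak-$\ast$ continuity of $\lambda\mapsto\lambda R$ (hence the semicontinuity of $Q$) may require an extra regularity hypothesis on $R$, and that one must control the supremum/infimum interchange needed to pass from the $M$-truncations to $\|\mu_{abs}\|$ while preserving the correct one-sided semicontinuity.
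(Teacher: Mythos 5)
Your reduction of the left-hand side of (\ref{eq:L3}) to the total mass $\mu_{sing}\left(X\right)$ of the singular part in (\ref{eq:L1}) is correct (under the reading $\sqrt{d\mu_{abs}/d\tau}=\sqrt{W_{\lambda}}\sqrt{d\lambda/d\tau}$, which is indeed the only reading under which the quantity vanishes for $\lambda\in\mathscr{L}_{1}\left(R\right)$), and it is a concrete, computational version of what the paper does abstractly in $Sig\left(X\right)$, namely the equivalence of $\mathfrak{M}_{2}\left(\lambda R\right)\subseteq\mathfrak{M}_{2}\left(\lambda\right)$ with the vanishing of the norm in (\ref{eq:L10}) via orthogonality of the decomposition (\ref{eq:L1}). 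The easy implication is then the same in both arguments.

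The genuine gap is in the converse, and you have correctly located it yourself: everything hinges on weak-$*$ lower semicontinuity of $Q\left(\lambda\right)=\mu_{sing}\left(X\right)$ on $Prob\left(X\right)$, which you do not prove and which your own dual representation undercuts. Writing $\left\Vert \mu_{abs}\right\Vert =\sup_{M>0}\left\Vert \left(\lambda R\right)\wedge\left(M\lambda\right)\right\Vert $ gives
\[
Q\left(\lambda\right)=\inf_{M>0}\tfrac{1}{2}\left(\left\Vert \lambda R\right\Vert -M+\left\Vert \lambda R-M\lambda\right\Vert _{TV}\right),
\]
an \emph{infimum} of lower semicontinuous functionals, and such an infimum need not be lower semicontinuous; the supremum/infimum interchange you flag cannot simply be controlled, because absolute continuity is genuinely unstable under weak-$*$ limits (discrete measures converging to Lebesgue measure being the standard obstruction), so $\lambda_{n}R\ll\lambda_{n}$ can be lost at a weak-$*$ cluster point. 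The paper's route for this direction is different and avoids weak-$*$ limits altogether: it keeps the minimizing sequence inside the Hilbert space $Sig\left(X\right)$, works with the elements $W_{\lambda_{n}}\sqrt{\lambda_{n}}$ there, and produces the candidate measure not as a cluster point of $\left(\lambda_{n}\right)$ but as the dominating convex combination $d\lambda_{0}=\sum_{n}2^{-n}d\lambda_{n}/\lambda_{n}\left(X\right)$ (see the remark following the theorem), for which $\mathfrak{M}_{2}\left(\lambda_{n}\right)\subseteq\mathfrak{M}_{2}\left(\lambda_{0}\right)$ for every $n$, so that absolute continuity with respect to $\lambda_{0}$ is what one propagates. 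If you want to complete your argument you should replace the weak-$*$ limit by such a dominating measure (estimating the $\lambda_{0}$-singular part of $\lambda_{0}R=\sum_{n}2^{-n}\lambda_{n}R/\lambda_{n}\left(X\right)$ by the $\lambda_{n}$-singular parts $Q\left(\lambda_{n}\right)$); as written, the implication from (\ref{eq:L3}) to $\mathscr{L}_{1}\left(R\right)\neq\emptyset$ is not established.
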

\begin{proof}
To carry out the proof details, we shall make use of the Hilbert space
$Sig\left(X\right)$ of sigma-functions on $X$. While it has been
used in, for example \cite{MR0282379,MR0016534,MR562914,MR2847247},
we shall introduce the basic facts which will be needed. 

Elements in $Sig\left(X\right)$ are equivalence classes of pairs
$\left(f,\mu\right)$, where $f\in L^{2}\left(\mu\right)$, and $\mu$
is a positive finite measure on $X$; we say that $\left(f,\mu\right)\sim\left(g,\nu\right)$
for two such pairs iff $\tau=\frac{1}{2}\left(\mu+\nu\right)$ satisfies
\begin{equation}
f\sqrt{\frac{d\mu}{d\tau}}=g\sqrt{\frac{d\nu}{d\tau}}\quad\mbox{a.e. on \ensuremath{X} w.r.t. \ensuremath{\tau}.}\label{eq:L4}
\end{equation}

If $class\left(f_{i},\mu_{i}\right)$, $i=1,2$, are two equivalence
classes, then the operations in $Sig\left(X\right)$ are as follows:
First set $\tau_{s}=\frac{1}{2}\left(\mu_{1}+\mu_{2}\right)$, then
the inner product in $Sig\left(X\right)$ is 
\[
\int_{X}\overline{f_{1}}\sqrt{\frac{d\mu_{1}}{d\tau_{s}}}f_{2}\sqrt{\frac{d\mu_{2}}{d\tau_{s}}}d\tau_{s},
\]
and the sum is 
\[
class\left(f_{1}\sqrt{\frac{d\mu_{1}}{d\tau_{s}}}+f_{2}\sqrt{\frac{d\mu_{2}}{d\tau_{s}}},\tau_{s}\right).
\]
It is known that these definitions pass to equivalence classes; and
that $Sig\left(X\right)$ is a Hilbert space; in particular, it is
complete.

In order to complete the proof of the theorem, we shall need the following
facts about the Hilbert space $Sig\left(X\right)$; see e.g., \cite{MR0282379}:
First some notation; we set 
\begin{equation}
f\sqrt{d\mu}=class\left(f,\mu\right)\in Sig\left(X\right);\label{eq:L5}
\end{equation}
and when $\mu$ is fixed, we set $\mathfrak{M}_{2}\left(\mu\right)$
to be the closed subspace in $Sig\left(X\right)$ spanned by 
\[
\left\{ f\sqrt{d\mu}\mid f\in L^{2}\left(\mu\right)\right\} .
\]

We then have:
\begin{equation}
\left\Vert f\sqrt{d\mu}\right\Vert _{Sig\left(X\right)}^{2}=\left\Vert f\right\Vert _{L^{2}\left(\mu\right)}^{2}=\int_{X}\left|f\right|^{2}d\mu;\label{eq:L6}
\end{equation}
and so, in particular, 
\begin{equation}
L^{2}\left(\mu\right)\ni f\longmapsto f\sqrt{d\mu}\in Sig\left(X\right)\label{eq:L7}
\end{equation}
defines an isometry with range $\mathfrak{M}_{2}\left(\mu\right)$.
We shall abbreviate $\sqrt{d\mu}$ as $\sqrt{\mu}$. For two measures
$\mu$ and $\nu$, the following three facts holds:
\begin{eqnarray}
\left[\mu\ll\nu\right] & \Longleftrightarrow & \mathfrak{M}_{2}\left(\mu\right)\subseteq\mathfrak{M}_{2}\left(\nu\right),\nonumber \\
\left[\mu\approx\nu\right] & \Longleftrightarrow & \mathfrak{M}_{2}\left(\mu\right)=\mathfrak{M}_{2}\left(\nu\right)\mbox{, and}\label{eq:L8}\\
\left[\mbox{\ensuremath{\begin{matrix}\mbox{\ensuremath{\mu}\ and \ensuremath{\nu} are }\\
 \mbox{mutually singular} 
\end{matrix}}}\right] & \Longleftrightarrow & \mathfrak{M}_{2}\left(\mu\right)\perp\mathfrak{M}_{2}\left(\nu\right).\nonumber 
\end{eqnarray}

As a result, we note that therefore, the decomposition in (\ref{eq:L1})
is orthogonal in $Sig\left(X\right)$, and further that a fixed $\lambda\in Prob\left(X\right)$
is in $\mathscr{L}_{1}\left(R\right)$ if and only if
\begin{gather}
\mathfrak{M}_{2}\left(\lambda R\right)\subseteq\mathfrak{M}_{2}\left(\lambda\right)\label{eq:L9}\\
\Updownarrow\nonumber \\
\inf_{\lambda\in Prob\left(X\right)}\left\Vert \sqrt{\lambda R}-W_{\lambda}\sqrt{\lambda}\right\Vert _{Sig\left(X\right)}^{2}=0\label{eq:L10}
\end{gather}
Moreover, (\ref{eq:L9}) is a restatement of (\ref{eq:L3}).

We now turn to the conclusions in the theorem: One implication is
clear. If now the infimum in (\ref{eq:L10}) is zero, then there is
a sequence $\left\{ \lambda_{n}\right\} \subset Prob\left(X\right)$
such that 
\begin{equation}
\lim_{n}\left\Vert \sqrt{\lambda_{n}R}-W_{\lambda_{n}}\sqrt{\lambda_{n}}\right\Vert _{Sig\left(X\right)}^{2}=0.\label{eq:L11}
\end{equation}
Combining (\ref{eq:L10}) and (\ref{eq:L11}), and possibly passing
to a subsequence, we conclude that there is a sequence $W_{\lambda_{n}}\sqrt{\lambda_{n}}$
which is convergent in $Sig\left(X\right)$. Let the limit be $W_{\lambda_{0}}\sqrt{\lambda_{0}}$,
and it follows that $\lambda_{0}\in\mathscr{L}_{1}\left(R\right)$. \end{proof}
\begin{rem}
Since $Sig\left(X\right)$ is a Hilbert space, we conclude that the
sequence $\left\{ W_{\lambda_{n}}\sqrt{\lambda_{n}}\right\} _{n}$
in $Sig\left(X\right)$ satisfies 
\[
\lim_{n}\left\Vert \sqrt{\lambda_{0}R}-W_{\lambda_{n}}\sqrt{\lambda_{n}}\right\Vert _{Sig\left(X\right)}^{2}=0;
\]
where the desired measure $\lambda_{0}\in\mathscr{L}_{1}\left(R\right)$
may be taken to be
\[
d\lambda_{0}\left(\cdot\cdot\right)=\sum_{n=1}^{\infty}\frac{1}{2^{n}}\frac{d\lambda_{n}\left(\cdot\cdot\right)}{\lambda_{n}\left(X\right)}.
\]

\end{rem}

\section{\label{sec:EA}From Endomorphism to Automorphism}

In this section (\thmref{sol}), we build a path-space probability
space from a given generalized transfer operator $R$ assumed to satisfy
the spectral property from above.

There is a generalized family of multi-resolution measures on \emph{solenoids},
and we  shall need the following facts (see e.g., \cite{MR625600,MR2869044,MR2560042,MR2529881,MR2525528,MR2520021}): 

Let $X$ be a compact Hausdorff space, and let $\sigma:X\rightarrow X$
be a continuous endomorphism \emph{onto $X$. }Let 
\begin{equation}
\Omega:=\prod_{0}^{\infty}X=X\times X\times\cdots\label{eq:s1}
\end{equation}
be the infinite Cartesian product with coordinate mappings $Z_{n}:\Omega\longrightarrow X$,
\begin{equation}
Z_{n}\left(x_{0},x_{1},x_{2},\cdots\right)=x_{n}\in X,\quad n\in0,1,2,\cdots.\label{eq:s2}
\end{equation}

The associated \emph{solenoid }$Sol_{\sigma}\left(X\right)\subset\prod_{0}^{\infty}X$
is defined as follows: 
\begin{equation}
Sol_{\sigma}\left(X\right)=\left\{ \left(x_{n}\right)_{0}^{\infty}\in\Omega\mid\sigma\left(x_{n+1}\right)=x_{n},\;n=0,1,2,\cdots\right\} ;\label{eq:s3}
\end{equation}
and set 
\begin{equation}
\widetilde{\sigma}\left(x_{0},x_{1},x_{2},\cdots\right):=\left(\sigma\left(x_{0}\right),x_{0},x_{1},x_{2},\cdots\right).\label{eq:s4}
\end{equation}

We give $Sol_{\sigma}\left(X\right)$ its relative projective topology,
and note that the restricted random variable $\left(Z_{n}\right)_{n=0}^{\infty}$
from (\ref{eq:s2}) are then continuous. Moreover $\widetilde{\sigma}$,
in (\ref{eq:s4}), is invertible with 
\begin{equation}
\widetilde{\sigma}^{-1}\left(x_{0},x_{1},x_{2},x_{3},\cdots\right)=\left(x_{1},x_{2},x_{3},\cdots\right),\label{eq:s5}
\end{equation}
\begin{equation}
\widetilde{\sigma}\widetilde{\sigma}^{-1}=\widetilde{\sigma}^{-1}\widetilde{\sigma}=Id_{Sol_{\sigma}\left(X\right)}.\label{eq:s6}
\end{equation}

Let $\left(X,R,\lambda,W\right)$ be as specified in \secref{setting}.
In particular, $R$ is positive, i.e., $f\in C\left(X\right)$, $f\geq0$
$\Longrightarrow$ $R\left(f\right)\geq0$, and 
\begin{equation}
R\left(\left(f\circ\sigma\right)g\right)=fR\left(g\right),\quad\forall f,g\in C\left(X\right).\label{eq:s7}
\end{equation}
Moreover, $W$ is the Radon-Nikodym derivative of the measure $f\longrightarrow\lambda\left(R\left(f\right)\right)$
w.r.t. $\lambda$, i.e., 
\begin{equation}
\int_{X}R\left(f\right)d\lambda=\int_{X}fW\,d\lambda,\quad\forall f\in C\left(X\right).\label{eq:s8}
\end{equation}
Let $h\in L^{\infty}\left(\lambda\right)$, $h\geq0$, satisfying
\begin{equation}
Rh=h,\mbox{ and }\int_{X}h\,d\lambda=1.\label{eq:s9}
\end{equation}

\begin{rem}
In view of equations (\ref{eq:c1}) and (\ref{eq:s9}), it is natural
to think of these conditions as a generalized Perron-Frobenius property
for $R$.\end{rem}
\begin{thm}
\label{thm:sol}With the assumptions (\ref{eq:s7})-(\ref{eq:s9}),
we have the following conclusions:
\begin{enumerate}
\item For every $x\in X$, there is a unique Borel probability measure $\mathbb{P}_{x}$
on $Sol_{\sigma}\left(X\right)$ such that for all $n$, and all $f_{0},f_{1},\cdots,f_{n}\in C\left(X\right)$,
\begin{eqnarray}
 &  & \int_{Z_{0}^{-1}\left(x\right)}\left(f_{0}\circ Z_{0}\right)\left(f_{1}\circ Z_{1}\right)\cdots\left(f_{n}\circ Z_{n}\right)d\mathbb{P}_{x}\nonumber \\
 & = & f_{0}\left(x\right)R\left(f_{1}R\left(f_{2}R\left(\cdots R\left(f_{n}h\right)\cdots\right)\right)\right)\left(x\right).\label{eq:s10}
\end{eqnarray}

\item Moreover, setting 
\begin{equation}
\mathbb{P}=\int_{X}\mathbb{P}_{x}\,d\lambda\left(x\right),\label{eq:s11}
\end{equation}
we get that $\mathbb{P}$ is a probability measure on $Sol_{\sigma}\left(X\right)$
such that 
\begin{equation}
\mathbb{E}_{\mathbb{P}}\left(\cdots\mid Z_{0}=x\right)=\mathbb{P}_{x}\label{eq:s12}
\end{equation}
where the LHS in (\ref{eq:s12}) is the conditional measure, and the
RHS is the measure from (\ref{eq:s10}). 
\item \label{enu:sol3}We have the following Radon-Nikodym derivative:
\begin{equation}
\frac{d\mathbb{P}\circ\widetilde{\sigma}}{d\mathbb{P}}=W\circ Z_{0},\label{eq:s13}
\end{equation}
as an identity of the two functions specified in (\ref{eq:s13}).
Equivalently, setting 
\[
U\psi=\left(\sqrt{W\circ Z_{0}}\right)\psi\circ\widetilde{\sigma},\quad\psi\in L^{2}\left(Sol_{\sigma}\left(X\right),\mathbb{P}\right),
\]
then $U$ is a unitary operator in $L^{2}\left(Sol_{\sigma}\left(X\right),\mathbb{P}\right)$.
\end{enumerate}
\end{thm}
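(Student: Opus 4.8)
The plan is to realize each $\mathbb{P}_{x}$ as a projective limit of explicit finite-dimensional measures, assemble $\mathbb{P}$ by integration against $\lambda$, and then verify the Radon--Nikodym identity by a direct computation on cylinder functions, using only the two structural identities (\ref{eq:s7})--(\ref{eq:s8}) together with $Rh=h$.

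\emph{Part (1).} Rather than working on the algebra directly, I would build $\mathbb{P}_{x}$ by Kolmogorov extension. The conditional measures $P(\,\cdot\mid y)$ of \defref{cm} are, by (\ref{eq:m51}), supported on $\sigma^{-1}(y)$ (take $E=X$ there, so the $\sigma$-pushforward of $P(\cdot\mid y)$ is $\delta_{y}$). For each $n$ define a measure on $X^{n+1}$ by the iterated kernel
\[
d\mathbb{P}_{x}^{(n)}(y_{0},\dots,y_{n})=\delta_{x}(dy_{0})\,P(dy_{1}\mid y_{0})\cdots P(dy_{n}\mid y_{n-1})\,h(y_{n}).
\]
Unwinding the integrals from the inside out, using $\int f_{n}h\,P(dy_{n}\mid y_{n-1})=R(f_{n}h)(y_{n-1})$ repeatedly, shows that $\mathbb{P}_{x}^{(n)}$ reproduces exactly the right-hand side of (\ref{eq:s10}); positivity is automatic since $R$ is positive and $h\ge0$. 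Consistency of the family $\{\mathbb{P}_{x}^{(n)}\}$ under deleting the last coordinate is precisely the statement that $\int h\,P(dy_{n}\mid y_{n-1})=Rh(y_{n-1})=h(y_{n-1})$, i.e.\ it is where $Rh=h$ enters. The Kolmogorov (Prokhorov) theorem on the compact product $\Omega$ yields a measure $\mathbb{P}_{x}$; since each kernel lives on a fibre of $\sigma$, $\mathbb{P}_{x}$ is carried by $Sol_{\sigma}(X)$. Uniqueness follows because the cylinder monomials $(f_{0}\circ Z_{0})\cdots(f_{n}\circ Z_{n})$ separate points and span a dense unital subalgebra of $C(Sol_{\sigma}(X))$ by Stone--Weierstrass. (Taking all $f_{k}=1$ gives total mass $R^{n}h(x)=h(x)$; the ``probability'' normalization is the one that survives after integrating against $\lambda$ in Part (2), or equivalently in the disintegration sense.)

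\emph{Part (2).} For each cylinder $\psi$ the map $x\mapsto\int\psi\,d\mathbb{P}_{x}$ equals $f_{0}\cdot R(f_{1}R(\cdots R(f_{n}h)))$, which is Borel measurable in $x$ (it is built from the extended operator of \corref{Rext}), so $\mathbb{P}=\int_{X}\mathbb{P}_{x}\,d\lambda(x)$ is a well-defined positive measure with total mass $\int_{X}h\,d\lambda=1$ by (\ref{eq:s9}). Testing (\ref{eq:s10}) with $n=0$ shows the law of $Z_{0}$ under $\mathbb{P}$ is $h\,d\lambda$, and (\ref{eq:s12}) is then just the disintegration of $\mathbb{P}$ over that law, which holds on cylinders by Fubini and extends by density.

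\emph{Part (3).} This is the crux. Using the convention $\int\psi\,d(\mathbb{P}\circ\widetilde\sigma)=\int\psi\circ\widetilde\sigma^{-1}\,d\mathbb{P}$ and the shift formula $\widetilde\sigma^{-1}(x_{0},x_{1},\dots)=(x_{1},x_{2},\dots)$ from (\ref{eq:s5}), a depth-$n$ cylinder $\psi=\prod_{k=0}^{n}f_{k}\circ Z_{k}$ becomes $\prod_{k=0}^{n}f_{k}\circ Z_{k+1}$, a depth-$(n+1)$ cylinder with a trivial $Z_{0}$-factor. Evaluating by (\ref{eq:s10}) and integrating the outer coordinate via the defining property $\int_{X}R(g)\,d\lambda=\int_{X}gW\,d\lambda$ of (\ref{eq:s8}) gives
\[
\int\psi\circ\widetilde\sigma^{-1}\,d\mathbb{P}=\int_{X}R\bigl(f_{0}\,R(f_{1}R(\cdots R(f_{n}h)))\bigr)\,d\lambda=\int_{X}f_{0}\,W\,R\bigl(f_{1}R(\cdots R(f_{n}h))\bigr)\,d\lambda.
\]
On the other hand $\psi\,(W\circ Z_{0})$ is the depth-$n$ cylinder whose first function is $f_{0}W$, so (\ref{eq:s10}) produces the very same right-hand side. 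Equality on the dense algebra of cylinders, extended by dominated convergence, is exactly $d(\mathbb{P}\circ\widetilde\sigma)=(W\circ Z_{0})\,d\mathbb{P}$, i.e.\ (\ref{eq:s13}). For the unitary statement, substitute $\Psi=|\psi|^{2}$ into the identity $\int\Psi\,d\mathbb{P}=\int(\Psi\circ\widetilde\sigma)(W\circ Z_{0})\,d\mathbb{P}$ (got from (\ref{eq:s13}) and $\widetilde\sigma\circ\widetilde\sigma^{-1}=\mathrm{Id}$) to obtain $\|U\psi\|_{L^{2}(\mathbb{P})}^{2}=\int|\psi|^{2}\,d\mathbb{P}$, so $U$ is isometric; one then exhibits the two-sided inverse $U^{-1}\phi=\bigl((W\circ Z_{0})^{-1/2}\circ\widetilde\sigma^{-1}\bigr)\,\phi\circ\widetilde\sigma^{-1}$, well defined because $\widetilde\sigma$ is a bimeasurable bijection.

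I expect the main obstacle to be twofold. First, in Part (1), the Kolmogorov consistency and the reduction from a projective family to a genuine countably additive Borel measure on the solenoid rest entirely on the harmonicity $Rh=h$ and the positivity of $R$; getting these to interlock (including the support statement carrying $\mathbb{P}_{x}$ into $Sol_{\sigma}(X)$) is the delicate part. Second, in Part (3), upgrading the isometry $U$ to a unitary requires strict positivity of the weight $W\circ Z_{0}$ on the support of $\mathbb{P}$ — without $W>0$ $\mathbb{P}$-a.e.\ the operator $U^{-1}$ is not defined and $U$ is only isometric; securing this from the standing hypotheses (or flagging it as an additional nondegeneracy requirement on $W$) is where care is needed. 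Everything else is bookkeeping with cylinder functions and the identities (\ref{eq:s7})--(\ref{eq:s8}).
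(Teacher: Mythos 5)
Your proof is correct and follows essentially the same route as the paper's: a Kolmogorov/projective-limit construction on cylinder functions, with consistency supplied exactly by $Rh=h$ and uniqueness by Stone--Weierstrass density of cylinders in $C\left(Sol_{\sigma}\left(X\right)\right)$. In fact you go further than the paper's own (very terse) proof, which only verifies the consistency identity for part (1): you supply the cylinder-function computation establishing the Radon--Nikodym identity in part (3), and you correctly flag two points the paper leaves implicit --- that $\mathbb{P}_{x}$ has total mass $h\left(x\right)$ rather than $1$ (so only $\mathbb{P}$ is a genuine probability measure), and that unitarity of $U$, as opposed to mere isometry, requires $W>0$ almost everywhere with respect to $h\,d\lambda$.
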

\begin{proof}
This is the basic Kolmogorov inductive limit construction. We note
that, by Stone-Weierstrass, the space of cylinder-functions
\begin{equation}
\left(f_{0}\circ Z_{0}\right)\left(f_{1}\circ Z_{1}\right)\cdots\left(f_{n}\circ Z_{n}\right)\label{eq:sp1}
\end{equation}
is dense in $C\left(Sol_{\sigma}\left(X\right)\right)$. Fix $x\in X$,
and start with $Z_{0}^{-1}\left(x\right)$, set 
\begin{equation}
L_{n}^{x}\left(f_{1},f_{2},\cdots,f_{n}\right)=R\left(f_{1}R\left(f_{2}\cdots R\left(f_{n}h\right)\cdots\right)\right)\left(x\right).\label{eq:sp2}
\end{equation}
We get the desired consistency: 
\begin{equation}
L_{n+1}^{x}\left(f_{1},f_{2},\cdots,f_{n},\mathbbm{1}\right)=L_{n}^{x}\left(f_{1},f_{2},\cdots,f_{n}\right)\label{eq:sp3}
\end{equation}
where $\mathbbm{1}$ denotes the constant function $1$ on $X$. Indeed,
\begin{align*}
R\left(f_{n-1}R\left(f_{n}R\left(\mathbbm{1}h\right)\right)\right)\left(x\right) & =R\left(f_{n-1}R\left(f_{n}R\left(h\right)\right)\right)\left(x\right)\\
 & =R\left(f_{n-1}R\left(f_{n}h\right)\right)\left(x\right),\quad(\mbox{by }\left(\ref{eq:s9}\right))
\end{align*}
as claimed in (\ref{eq:sp3}).\end{proof}
\begin{lem}
Let $R$, $X$, $P\left(\cdot\mid x\right)$ be as above. Assume $h\geq0$
on $X$, and $Rh=h$. Then 
\begin{equation}
\left|R\left(fh\right)\left(x\right)\right|\leq\left\Vert f\right\Vert _{\infty}h\left(x\right)\label{eq:cp3}
\end{equation}
where $\left\Vert f\right\Vert _{\infty}$ is the $P\left(\cdot\mid x\right)$
$L^{\infty}$-norm on functions on $X$.\end{lem}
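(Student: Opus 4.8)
The plan is to deduce the estimate directly from the integral representation of $R$ in \corref{Rext}, together with the positivity of $h$ and the harmonicity hypothesis $Rh=h$. The starting point is to expand $R\left(fh\right)\left(x\right)$ against the conditional measure $P\left(\cdot\mid x\right)$ from \defref{cm}:
\[
R\left(fh\right)\left(x\right)=\int_{X}f\left(y\right)h\left(y\right)\,P\left(dy\mid x\right).
\]
Since $h\geq0$, the expression $h\left(y\right)\,P\left(dy\mid x\right)$ defines a positive measure on $X$, and this is precisely what permits pulling the supremum of $f$ out of the integral.

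The key step is the pointwise bound $\left|f\left(y\right)\right|\leq\left\Vert f\right\Vert _{\infty}$, which holds for $P\left(\cdot\mid x\right)$-almost every $y$, because $\left\Vert f\right\Vert _{\infty}$ is by hypothesis the $P\left(\cdot\mid x\right)$-essential supremum. Integrating the inequality $\left|f\left(y\right)\right|h\left(y\right)\leq\left\Vert f\right\Vert _{\infty}h\left(y\right)$ against $P\left(\cdot\mid x\right)$ (legitimate since $h\geq0$) and applying the triangle inequality for integrals yields
\[
\left|R\left(fh\right)\left(x\right)\right|\leq\int_{X}\left|f\left(y\right)\right|h\left(y\right)\,P\left(dy\mid x\right)\leq\left\Vert f\right\Vert _{\infty}\int_{X}h\left(y\right)\,P\left(dy\mid x\right)=\left\Vert f\right\Vert _{\infty}R\left(h\right)\left(x\right).
\]

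Finally I would invoke $Rh=h$ to replace $R\left(h\right)\left(x\right)$ by $h\left(x\right)$, giving the claimed bound $\left|R\left(fh\right)\left(x\right)\right|\leq\left\Vert f\right\Vert _{\infty}h\left(x\right)$. There is no genuine obstacle here: the only point requiring care is that $\left\Vert f\right\Vert _{\infty}$ is $x$-dependent, being measured against the very same $P\left(\cdot\mid x\right)$ that appears on both sides of the asserted inequality, so the essential-supremum bound is applied under exactly the measure against which we integrate, and this is what makes the middle step valid.
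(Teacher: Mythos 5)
Your proof is correct, but it follows a genuinely different route from the paper's. You invoke the integral representation $R(fh)(x)=\int_X f(y)h(y)\,P(dy\mid x)$ and pull the $P(\cdot\mid x)$-essential supremum of $f$ out of the positive measure $h(y)\,P(dy\mid x)$ directly, then close with $R(h)(x)=h(x)$; this is the shortest path and every step is legitimate, including your observation that the essential supremum is taken against exactly the measure being integrated. The paper instead writes $fh=(fh^{1/2})h^{1/2}$ and applies the Cauchy--Schwarz inequality for $P(\cdot\mid x)$ to get $\left|R(fh)(x)\right|\leq\left(R(|f|^{2}h)(x)\right)^{1/2}h(x)^{1/2}$, iterates this $p$ times to obtain $R(|f|^{2^{p}}h)(x)^{1/2^{p}}\,h(x)^{1/2+\cdots+1/2^{p}}$, and then lets $p\to\infty$, using the classical fact that $L^{2^{p}}$-norms converge to the $L^{\infty}$-norm. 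The iterated-Schwarz argument has the virtue of relying only on the Schwarz inequality for the positive map $R$ together with $Rh=h$, a structure that persists in settings where no pointwise kernel is available; your argument is more elementary and avoids the limit entirely, but leans on the Riesz representation $P(\cdot\mid x)$ of $R$, which the paper has in any case already established in Corollary \ref{cor:Rext}. Both yield the stated bound, and in fact the paper's limit naturally produces the (possibly smaller) essential supremum with respect to $h\,dP(\cdot\mid x)$, so your version is, if anything, stated against the slightly larger norm appearing in the lemma.
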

\begin{proof}
We may apply Cauchy-Schwarz to $P\left(\cdot\mid x\right)$ in a sequence
of steps as follows:
\begin{alignat}{2}
\left|R\left(fh\right)\left(x\right)\right| & =\left|R(fh^{\frac{1}{2}}h^{\frac{1}{2}})\left(x\right)\right|\label{eq:cp4}\\
 & \leq(R(\left|f\right|^{2}h)\left(x\right))^{\frac{1}{2}}(R\left(h\right)\left(x\right))^{\frac{1}{2}} & \qquad & \mbox{by Schwarz}\nonumber \\
 & =(R(\left|f\right|^{2}h)\left(x\right))^{\frac{1}{2}}h\left(x\right)^{\frac{1}{2}} &  & \mbox{since }Rh=h\nonumber \\
 & \leq\underset{\longrightarrow\left\Vert f\right\Vert _{\infty}}{\underbrace{R(\left|f\right|^{2^{p}}h)\left(x\right)^{\frac{1}{2^{p}}}}}\:\underset{\longrightarrow h\left(x\right)}{\underbrace{h\left(x\right)^{\frac{1}{2}+\frac{1}{2^{2}}+\cdots+\frac{1}{2^{p}}}}} &  & \mbox{by induction, and let \ensuremath{p\longrightarrow\infty}}\nonumber 
\end{alignat}
An elementary result in measure theory (see \cite{MR924157}) shows
that 
\begin{equation}
\lim_{p\rightarrow\infty}R(\left|f\right|^{2^{p}}h)\left(x\right)^{\frac{1}{2^{p}}}=\left\Vert f\right\Vert _{\infty};\label{eq:cp5}
\end{equation}
and so the desired estimate (\ref{eq:cp3}) holds.\end{proof}
\begin{cor}
\label{cor:pm1}Let $R,h,\sigma,W,\lambda\in\mathscr{L}\left(R\right)$
be as above, where $\mu=\lambda\cdot R$, and $W=\frac{d\mu}{d\lambda}$.
Assume $h>0$ on $X$, and $Rh=h$. Let $\mathbb{P}$ and $\mathbb{P}_{x}$
be the measures on $Sol_{\sigma}\left(X\right)$ as in \thmref{sol},
where $\mathbb{P}_{x}$ is determined by 
\begin{align}
\int_{Z_{0}^{-1}\left(x\right)}\left(f_{1}\circ Z_{1}\right)\cdots\left(f_{n}\circ Z_{n}\right)d\mathbb{P}_{x} & =L_{n}^{x}\left(f_{1},\cdots,f_{n}\right)\nonumber \\
 & =R\left(f_{1}R\left(f_{2}\cdots R\left(f_{n}h\right)\cdots\right)\right)\left(x\right).\label{eq:cp5-1}
\end{align}
Then, 
\begin{equation}
\left|\int\psi d\mathbb{P}_{x}\right|\leq\left\Vert \psi\right\Vert _{\infty}h\left(x\right),\quad\forall\psi.
\end{equation}
That is, 
\begin{equation}
\left|\frac{\mathbb{E}\left(\psi\mid x\right)}{h\left(x\right)}\right|\leq\left\Vert \psi\right\Vert _{\infty},\quad\forall\psi;
\end{equation}
in particular, $\mathbb{E}\left(\psi\mid x\right)=\int\psi d\mathbb{P}_{x}\left(=\mbox{conditional expectation}\right)$
is well-defined. See (\ref{eq:s12}).
\end{cor}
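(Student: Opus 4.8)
The plan is to exploit the fact that, although (\ref{eq:cp5-1}) only prescribes $\mathbb{P}_{x}$ on cylinder functions, the right-hand side $L_{n}^{x}$ is a \emph{positive} functional of finite total mass $h\left(x\right)$; once this is established, the desired estimate is the elementary bound for integration against a finite positive measure, and the iterated form of the preceding lemma is exactly what secures finiteness.

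First I would record positivity. If $f_{1},\dots,f_{n}\in C\left(X\right)$ with $f_{i}\geq0$, then since $R$ is positivity-preserving and $h\geq0$, the nested expression $R\left(f_{1}R\left(f_{2}\cdots R\left(f_{n}h\right)\cdots\right)\right)\left(x\right)$ is nonnegative. Hence $\psi\geq0$ on cylinder functions forces $\int\psi\,d\mathbb{P}_{x}\geq0$, so $\mathbb{P}_{x}$ is a positive measure on $Sol_{\sigma}\left(X\right)$. Next I would compute the total mass by inserting $f_{1}=\cdots=f_{n}=\mathbbm{1}$ into (\ref{eq:cp5-1}) and peeling off the constants using $Rh=h$ repeatedly (equivalently, the consistency relation (\ref{eq:sp3})), obtaining $\mathbb{P}_{x}\left(Sol_{\sigma}\left(X\right)\right)=R\left(R\left(\cdots R\left(h\right)\cdots\right)\right)\left(x\right)=h\left(x\right)$.

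The role of the preceding lemma (\ref{eq:cp3}) is precisely to guarantee boundedness on cylinder functions: iterating $\left|R\left(fh\right)\left(x\right)\right|\leq\left\Vert f\right\Vert _{\infty}h\left(x\right)$ yields $\left|L_{n}^{x}\left(f_{1},\dots,f_{n}\right)\right|\leq\left\Vert f_{1}\right\Vert _{\infty}\cdots\left\Vert f_{n}\right\Vert _{\infty}h\left(x\right)$, so that together with positivity, Riesz' theorem produces a genuine finite Borel measure $\mathbb{P}_{x}$ of mass $h\left(x\right)$, and the prescription extends from the dense (Stone--Weierstrass) algebra of cylinder functions to all of $C\left(Sol_{\sigma}\left(X\right)\right)$. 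With $\mathbb{P}_{x}$ a positive measure of total mass $h\left(x\right)$, the conclusion is immediate: for every bounded $\psi$, $\left|\int\psi\,d\mathbb{P}_{x}\right|\leq\int\left|\psi\right|\,d\mathbb{P}_{x}\leq\left\Vert \psi\right\Vert _{\infty}\mathbb{P}_{x}\left(Sol_{\sigma}\left(X\right)\right)=\left\Vert \psi\right\Vert _{\infty}h\left(x\right)$; since $h>0$ on $X$, dividing by $h\left(x\right)$ gives the normalized form and the well-definedness of $\mathbb{E}\left(\psi\mid x\right)$.

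The main obstacle I anticipate is not the final estimate but the passage from cylinder functions to general $\psi$. One must verify that the iterated bound from (\ref{eq:cp3}) together with positivity really yields a single finite positive measure rather than merely a densely defined functional, and that substituting the constant $\mathbbm{1}$ in the total-mass computation is compatible with the projective-limit structure of $Sol_{\sigma}\left(X\right)$. Here the consistency identity (\ref{eq:sp3}) is exactly what makes the insertion of $\mathbbm{1}$ harmless, so I expect this to be the only delicate point.
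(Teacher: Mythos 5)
Your proposal is correct and matches the route the paper intends: the corollary is stated without an explicit proof precisely because it is meant to follow from iterating the preceding estimate $\left|R\left(fh\right)\left(x\right)\right|\leq\left\Vert f\right\Vert _{\infty}h\left(x\right)$ on cylinder functions, combined with the positivity of $\mathbb{P}_{x}$ and the total-mass computation $\mathbb{P}_{x}\left(Z_{0}^{-1}\left(x\right)\right)=h\left(x\right)$ coming from $Rh=h$ (cf.\ Corollary \ref{cor:cp}). Your identification of the passage from the dense algebra of cylinder functions to general $\psi$ as the only delicate point is also apt, and it is handled exactly as you suggest by the Kolmogorov/Riesz construction already carried out in Theorem \ref{thm:sol}.
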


\begin{cor}
\label{cor:cp}Let $X,R,\sigma,\lambda,h$ be as described above;
in particular, $Rh=h$ is assumed. Let $\left\{ \mathbb{P}_{x}\right\} _{x\in X}$
be the measures from \corref{pm1}. Then 
\[
h\left(x\right)=\mathbb{P}_{x}\left(Z_{0}^{-1}\left(x\right)\right)\mbox{ for all \ensuremath{x\in X}.}
\]

\end{cor}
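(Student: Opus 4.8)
The plan is to read off $h(x)$ from the defining moment formula (\ref{eq:cp5-1}) by testing it against the simplest possible cylinder functions, namely constants. The key structural observation is that the Kolmogorov construction in \thmref{sol} produces each $\mathbb{P}_x$ as a measure carried by the fiber $Z_0^{-1}(x)$ (all the defining integrals in (\ref{eq:cp5-1}) are taken over $Z_0^{-1}(x)$), so the quantity we want, $\mathbb{P}_x(Z_0^{-1}(x))$, is precisely the total mass $\int_{Z_0^{-1}(x)} 1\, d\mathbb{P}_x$ of $\mathbb{P}_x$.

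To evaluate this total mass, I would substitute $n=1$ and $f_1=\mathbbm{1}$ (the constant function $1$) into (\ref{eq:cp5-1}). Since $\mathbbm{1}\circ Z_1\equiv 1$, the left-hand side collapses to $\mathbb{P}_x(Z_0^{-1}(x))$, while the right-hand side becomes $R(\mathbbm{1}\,h)(x)=R(h)(x)$. Invoking the harmonicity hypothesis $Rh=h$ then gives $R(h)(x)=h(x)$, which is the assertion. Equivalently, and to make the mechanism transparent for general $n$, one may take $f_1=\cdots=f_n=\mathbbm{1}$ and peel the nested expression $R(\mathbbm{1}\,R(\mathbbm{1}\cdots R(\mathbbm{1}\,h)\cdots))(x)$ one layer at a time: each use of $\mathbbm{1}\cdot g=g$ followed by $Rh=h$ sends $R(\mathbbm{1}\,h)\mapsto h$, so the whole tower telescopes down to $h(x)$ independently of $n$, consistently with the compatibility relation (\ref{eq:sp3}).

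There is essentially no obstacle here; the single point that deserves a line of justification is that $\mathbb{P}_x$ is genuinely supported on $Z_0^{-1}(x)$, so that the constant-$1$ cylinder integrates to the fiber mass rather than to the mass of all of $Sol_{\sigma}(X)$ — but this is built into the construction of \thmref{sol} via \corref{pm1}. As a byproduct, the computation records that the total mass of $\mathbb{P}_x$ equals $h(x)$ rather than $1$; this is consistent with (\ref{eq:s11}) and (\ref{eq:s9}), since then $\mathbb{P}(Sol_{\sigma}(X))=\int_X h\,d\lambda=1$, so that $\mathbb{P}$ is indeed a probability measure even though the fiber measures $\mathbb{P}_x$ are in general only sub-probabilities scaled by $h$.
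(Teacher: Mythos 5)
Your proof is correct and is essentially the argument the paper intends (the corollary is left without an explicit proof precisely because it follows by setting all test functions equal to $\mathbbm{1}$ in (\ref{eq:cp5-1}) or (\ref{eq:s10}) and invoking $Rh=h$). Your closing remark that the fibre measures $\mathbb{P}_{x}$ have total mass $h(x)$ rather than $1$, reconciled with $\int_{X}h\,d\lambda=1$ so that $\mathbb{P}$ is still a probability measure, is a correct and worthwhile observation.
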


\begin{cor}
Let $R,h,\sigma,W,\lambda$ be as above, and let $\mathbb{P}$ and
$\mathbb{P}_{x}$ be the corresponding measures on $Sol_{\sigma}\left(X\right)$,
then $V_{0}$ is \uline{isometric}, where 
\[
V_{0}:L^{2}\left(X,h\,d\lambda\right)\longrightarrow L^{2}\left(Sol_{\sigma}\left(X\right),\mathbb{P}\right)
\]
is given by 
\begin{equation}
V_{0}g=g\circ Z_{0},\quad g\in L^{2}\left(X,h\,d\lambda\right),\label{eq:cp-v0a}
\end{equation}
and 
\begin{equation}
\left(V^{*}\psi\right)\left(x\right)=\frac{\mathbb{E}\left(\psi\mid x\right)}{h\left(x\right)},\quad\forall x\in X,\:\forall\psi\in L^{2}\left(Sol_{\sigma}\left(X\right),\mathbb{P}\right).\label{eq:cp-v0b}
\end{equation}
\end{cor}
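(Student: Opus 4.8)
The plan is to derive both assertions from the disintegration $\mathbb{P}=\int_X\mathbb{P}_x\,d\lambda(x)$ in (\ref{eq:s11}), together with the mass identity $\mathbb{P}_x\left(Z_0^{-1}(x)\right)=h(x)$ from \corref{cp}. The essential structural fact I will exploit is that each $\mathbb{P}_x$ is carried by the fiber $Z_0^{-1}(x)$, so that the coordinate $Z_0$ is $\mathbb{P}_x$-a.e.\ equal to the constant $x$; consequently, for any $g$, the function $g\circ Z_0$ reduces to the scalar $g(x)$ on the support of $\mathbb{P}_x$.

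First I would establish the isometry. Writing out the norm and inserting the disintegration gives
\[
\left\Vert V_0 g\right\Vert_{L^2(\mathbb{P})}^2=\int_{Sol_\sigma(X)}\left|g\circ Z_0\right|^2 d\mathbb{P}=\int_X\left(\int_{Z_0^{-1}(x)}\left|g\circ Z_0\right|^2 d\mathbb{P}_x\right)d\lambda(x).
\]
On $Z_0^{-1}(x)$ one may replace $g\circ Z_0$ by $g(x)$, so the inner integral equals $\left|g(x)\right|^2\mathbb{P}_x\left(Z_0^{-1}(x)\right)=\left|g(x)\right|^2 h(x)$ by \corref{cp}. Integrating against $\lambda$ yields $\left\Vert V_0 g\right\Vert_{L^2(\mathbb{P})}^2=\int_X\left|g(x)\right|^2 h(x)\,d\lambda(x)=\left\Vert g\right\Vert_{L^2(X,h\,d\lambda)}^2$, which is the claimed isometry. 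In particular $V_0$ is a well-defined contraction, so its Hilbert-space adjoint exists.

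Next I would compute the adjoint. For $g\in L^2(X,h\,d\lambda)$ and $\psi\in L^2(Sol_\sigma(X),\mathbb{P})$, the same disintegration and fiber reduction give
\[
\left\langle V_0 g,\psi\right\rangle_{L^2(\mathbb{P})}=\int_X\overline{g(x)}\left(\int_{Z_0^{-1}(x)}\psi\,d\mathbb{P}_x\right)d\lambda(x)=\int_X\overline{g(x)}\,\mathbb{E}(\psi\mid x)\,d\lambda(x),
\]
where $\mathbb{E}(\psi\mid x)=\int\psi\,d\mathbb{P}_x$ is the (unnormalized) conditional quantity of \corref{pm1}. Since $h>0$, I rewrite the last integrand as $\overline{g(x)}\,\bigl(\mathbb{E}(\psi\mid x)/h(x)\bigr)\,h(x)$, which exhibits the pairing as $\left\langle g, V^*\psi\right\rangle_{L^2(X,h\,d\lambda)}$ with $(V^*\psi)(x)=\mathbb{E}(\psi\mid x)/h(x)$. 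This is exactly (\ref{eq:cp-v0b}).

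The one point requiring care---and the main (minor) obstacle---is the well-definedness of $V^*\psi$ for general $\psi\in L^2(\mathbb{P})$ rather than merely bounded $\psi$: the pointwise bound $\left|\mathbb{E}(\psi\mid x)\right|\le\left\Vert\psi\right\Vert_\infty h(x)$ from \corref{pm1} controls $V^*\psi$ only on $L^\infty$. I would handle this by first verifying the adjoint identity on the dense subspace of bounded cylinder functions (\ref{eq:sp1}), where all manipulations and the interchange of integrals are legitimate, and then extending to all of $L^2(\mathbb{P})$ by continuity, using that $V_0$ is a contraction so that $V^*$ is the genuine adjoint with $\left\Vert V^*\right\Vert\le 1$. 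The normalization $\int_X h\,d\lambda=1$ from (\ref{eq:s9}) guarantees that $\mathbb{P}$ is a probability measure, so these density arguments are carried out in honest $L^2$ spaces.
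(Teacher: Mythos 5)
Your proposal is correct and follows essentially the same route as the paper: the isometry comes from the disintegration $\mathbb{P}=\int_X\mathbb{P}_x\,d\lambda(x)$ together with the fiber property of $\mathbb{P}_x$ and the identity $\mathbb{P}_x\left(Z_0^{-1}(x)\right)=h(x)$, and the adjoint formula is verified on the dense span of cylinder functions, where $g\circ Z_0$ is absorbed into the $f_0$-factor. Your write-up is somewhat more explicit than the paper's (which states the isometry in one line and leaves the fiber reduction implicit), and your closing remark on extending $V^{*}$ from bounded cylinder functions to all of $L^2(\mathbb{P})$ by the contraction property is a point the paper passes over silently.
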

\begin{proof}
Since $\mathbb{P}=\int_{X}\mathbb{P}_{x}d\lambda\left(x\right)$,
it follows that $V_{0}$ in (\ref{eq:cp-v0a}) is isometric, i.e.,
\[
\left\Vert V_{0}g\right\Vert _{L^{2}\left(\mathbb{P}\right)}^{2}=\int_{X}\left|g\right|^{2}h\,d\lambda=\left\Vert g\right\Vert _{L^{2}\left(h\,d\lambda\right)}^{2},\quad\forall g\in C\left(X\right).
\]
To prove (\ref{eq:cp-v0b}), we must establish 
\begin{equation}
\int_{Sol_{\sigma}\left(X\right)}\left(g\circ Z_{0}\right)\psi\,d\mathbb{P}=\int_{X}g\left(x\right)\mathbb{E}\left(\psi\mid x\right)d\lambda\left(x\right).\label{eq:cp6}
\end{equation}
Since the space of the cylinder functions $\psi=\left(f_{0}\circ Z_{0}\right)\left(f_{1}\circ Z_{1}\right)\cdots\left(f_{n}\circ Z_{n}\right)$
is dense in $C\left(Sol_{\sigma}\left(X\right)\right)$, it suffices
to prove (\ref{eq:cp6}) for $\psi$. But then 
\[
\left(g\circ Z_{0}\right)\psi=\left(gf_{0}\right)\circ Z_{0}\left(f_{1}\circ Z_{1}\right)\cdots\left(f_{n}\circ Z_{n}\right),
\]
and so (\ref{eq:cp6}) follows from (\ref{eq:cp5-1}).\end{proof}
\begin{cor}
Fix $x\in X$, and set 
\begin{equation}
\mathbb{E}\left(\psi\mid x\right)=\int_{Z_{0}^{-1}\left(x\right)}\psi\,d\mathbb{P}_{x},\quad\psi\in L^{2}\left(\mathbb{P}\right).\label{eq:d1}
\end{equation}
For all $n\in\mathbb{N}$, if $A_{i}\subset X$, $i=1,\cdots,n$,
are Borel sets, then 
\begin{eqnarray}
 &  & \mathbb{P}_{x}\left(Z_{1}\in A_{1},Z_{2}\in A_{2},\cdots,Z_{n}\in A_{n}\right)\nonumber \\
 & = & \int_{A_{1}}\int_{A_{2}}\cdots\int_{A_{n}}h\left(y_{n}\right)P\left(dy_{n}\mid y_{n-1}\right)\cdots P\left(dy_{2}\mid y_{1}\right)P\left(dy_{1}\mid x\right).\label{eq:d2}
\end{eqnarray}
\end{cor}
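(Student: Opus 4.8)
The plan is to derive (\ref{eq:d2}) directly from the defining relation (\ref{eq:cp5-1}) for the conditional measures $\mathbb{P}_{x}$, by unwinding the nested applications of $R$ into iterated integrals against the kernels $P(\cdot\mid x)$, and then specializing the test functions $f_{1},\dots,f_{n}$ to the indicators $\chi_{A_{1}},\dots,\chi_{A_{n}}$. The starting point is that, by (\ref{eq:R}) (equivalently (\ref{eq:cp1})), each single application of $R$ is integration against a conditional measure, $R\left(g\right)\left(w\right)=\int_{X}g\left(y\right)P\left(dy\mid w\right)$, and the family $\left\{ P\left(\cdot\mid w\right)\right\} _{w\in X}$ is a measurable (Markov) kernel, so the iterated integrals below are well defined.

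First I would record the expansion of $L_{n}^{x}$. Applying the kernel representation repeatedly, from the innermost factor $R\left(f_{n}h\right)$ outward to the expression $L_{n}^{x}\left(f_{1},\dots,f_{n}\right)=R\left(f_{1}R\left(f_{2}\cdots R\left(f_{n}h\right)\cdots\right)\right)\left(x\right)$, yields, by induction on $n$,
\[
L_{n}^{x}\left(f_{1},\dots,f_{n}\right)=\int_{X}\cdots\int_{X}f_{1}\left(y_{1}\right)\cdots f_{n}\left(y_{n}\right)h\left(y_{n}\right)\,P\left(dy_{n}\mid y_{n-1}\right)\cdots P\left(dy_{1}\mid x\right).
\]
The inductive step is the single identity $R\bigl(f_{1}\cdot G\bigr)\left(x\right)=\int_{X}f_{1}\left(y_{1}\right)G\left(y_{1}\right)P\left(dy_{1}\mid x\right)$, applied with $G=R\left(f_{2}\cdots R\left(f_{n}h\right)\cdots\right)$; by the inductive hypothesis $G\left(y_{1}\right)$ is already the $(n-1)$-fold iterated integral with $y_{1}$ in place of $x$, and positivity of the kernels (together with $h\ge0$) guarantees that all the nested integrands are nonnegative and measurable, so the $n$-fold integral assembles unambiguously. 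At this stage the left-hand side of (\ref{eq:cp5-1}) reads $\int_{Z_{0}^{-1}\left(x\right)}\left(f_{1}\circ Z_{1}\right)\cdots\left(f_{n}\circ Z_{n}\right)\,d\mathbb{P}_{x}$.

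Finally I would pass from $f_{i}\in C\left(X\right)$ to $f_{i}=\chi_{A_{i}}$, and this is where the only real work lies, since indicators are not continuous and cannot be substituted into (\ref{eq:cp5-1}) directly. To handle this I would invoke the extension $\widetilde{R}$ to bounded Borel functions from \corref{Rext}, together with the uniform estimate $\bigl|\int\psi\,d\mathbb{P}_{x}\bigr|\le\left\Vert \psi\right\Vert _{\infty}h\left(x\right)$ of \corref{pm1}; the latter shows that both sides of the identity depend on each $f_{i}$ in a way compatible with uniformly bounded pointwise limits. A standard monotone-class/dominated-convergence argument, approximating each $\chi_{A_{i}}$ by continuous functions $0\le f_{i}\le1$ and taking the limit one variable at a time (from the innermost integral outward), then promotes the continuous-function identity to arbitrary Borel sets $A_{1},\dots,A_{n}$. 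With $f_{i}=\chi_{A_{i}}$ the left-hand side becomes $\mathbb{P}_{x}\left(Z_{1}\in A_{1},\dots,Z_{n}\in A_{n}\right)$, because $\mathbb{P}_{x}$ is supported on $Z_{0}^{-1}\left(x\right)$, and the right-hand side becomes exactly the iterated integral in (\ref{eq:d2}). The main obstacle is precisely ensuring that this limit commutes with the nested structure of the integrals, i.e. that the extension $\widetilde{R}$ passes through each layer of the iteration; this is where positivity of $R$, the harmonicity $Rh=h$, and the bound from \corref{pm1} are used in concert to dominate the successive integrands.
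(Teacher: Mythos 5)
Your proposal is correct and follows essentially the same route as the paper: substitute $f_{i}=\chi_{A_{i}}$ into the defining relation (\ref{eq:cp5-1}) and unwind the nested applications of $R$ into iterated integrals against the kernels $P\left(\cdot\mid x\right)$ via (\ref{eq:R}). The only difference is that you spell out the monotone-class passage from continuous $f_{i}$ to indicators (using \corref{Rext} and the bound of \corref{pm1}), a standard step the paper's proof leaves implicit.
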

\begin{proof}
Recall that $\chi_{A}\circ Z_{i}=\chi_{Z_{i}^{-1}\left(A\right)}$,
if $A\subset X$ is a Borel set; and 
\begin{equation}
Z_{i}^{-1}\left(A\right)=\left\{ x\in Sol_{\sigma}\left(X\right)\mid Z_{i}\left(x\right)\in A\right\} ,\label{eq:d3}
\end{equation}
where $Z_{i}\left(x_{0},x_{1},x_{2},\cdots\right)=x_{i}$ is the coordinate
mapping. Also, $P\left(\cdot\mid x\right)$ satisfies 
\begin{equation}
R\left(f\right)\left(x\right)=\int_{X}f\left(y\right)P\left(dy\mid x\right),\quad\forall x\in X.\label{eq:d4}
\end{equation}
Now set $f_{i}=\chi_{A_{i}}$, with $A_{i}\subset X$ Borel sets,
and apply the mapping 
\[
f_{i}\longrightarrow R\left(f_{1}R\left(f_{2}\cdots R\left(f_{n}h\right)\cdots\right)\right)\left(x\right).
\]

If we specialize (\ref{eq:d2}) to individual transition probabilities,
we get, $x\in X$, $A\subset X$ a Borel set, and 
\begin{align*}
\mathbb{P}\left(Z_{1}\in A\mid Z_{0}=x\right) & =\int_{A}h\left(y\right)P\left(dy\mid x\right);\\
\mathbb{P}\left(Z_{2}\in B,Z_{1}\in A\mid Z_{0}=x\right) & =\int_{A}\int_{B}h\left(y_{2}\right)P\left(dy_{2}\mid y_{1}\right)P\left(dy_{1}\mid x\right),\quad y_{1}\in A,y_{2}\in B.
\end{align*}

Note that, fix $n>1$, then 
\[
\mathbb{P}\left(Z_{n}\in A\mid Z_{0}=x\right)=\mathbb{P}_{x}\left(Z_{n}\in A\right)=R^{n}\left(\chi_{A}h\right)\left(x\right),\mbox{ and}
\]
\[
\mathbb{P}_{x}\left(Z_{n+1}\in B,Z_{n}\in A\right)=R^{n}\left(\chi_{A}R\left(\chi_{B}h\right)\right)\left(x\right)\neq\mathbb{P}_{x}\left(Z_{2}\in B,Z_{1}\in A\right),
\]
so it is not Markov. 
\end{proof}
Hence the transition from $n$ to $n+1$ gets more \textquotedblleft flat\textquotedblright{}
as $n$ increases, the transition probability evens out with time.

\subsection{Multi-Resolutions}

Let $X,\sigma,R,h$, and $\lambda$ be as in the setting of \thmref{sol}
above. In particular, we are assuming that:
\begin{enumerate}[label=(\roman{enumi}),ref=\roman{enumi},itemsep=0.5em]
\item $R\left(\left(f\circ\sigma\right)g\right)=fR\left(g\right)$, $\forall f,g\in C\left(X\right)$, 
\item $Rh=h$, $h\geq0$,
\item $\int R\left(f\right)d\lambda=\int_{X}fW\,d\lambda$, $\forall f\in C\left(X\right)$,
and
\item $\int_{X}h\left(x\right)d\lambda\left(x\right)=1$. 
\end{enumerate}
We then pass to the probability space $\left(Sol_{\sigma}\left(X\right),\mathbb{P}_{x},\mathbb{P}\right)$
from the conclusion in \thmref{sol}.
\begin{defn}
Let $\mathscr{H}$ be a Hilbert space, and $\left\{ \mathscr{H}_{n}\right\} _{n\in\mathbb{N}_{0}}$
a given system of closed subspaces such that $\mathscr{H}_{n}\subset\mathscr{H}_{n+1}$,
for all $n$. 

We further assume that $\cup_{n}\mathscr{H}_{n}$ is dense in $\mathscr{H}$,
and that a unitary operator $U$ in $\mathscr{H}$ satisfying $U\left(\mathscr{H}_{n}\right)\subset\mathscr{H}_{n-1}$,
for all $n\in\mathbb{N}$. Then we say that $\left(\left(\mathscr{H}_{n}\right)_{n\in\mathbb{N}_{0}},U\right)$
is a \emph{multi-resolution} for the Hilbert space $\mathscr{H}$.\end{defn}
\begin{thm}
\label{thm:mr}Let $\mathscr{H}=L^{2}\left(Sol_{\sigma}\left(X\right),\mathbb{P}\right)$
be the Hilbert space from the construction in \thmref{sol}, and let
$\mathscr{H}_{n}$ be the closed subspaces defined from the random
walk process $\left(Z_{n}\right)_{n\in\mathbb{N}}$. Finally, let
$U$ be the operator in part (\ref{enu:sol3}) of \thmref{sol}. Then
this constitutes a \uline{multi-resolution}\emph{.}\end{thm}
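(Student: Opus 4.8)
The plan is to verify the four defining properties of a multi-resolution directly, after first pinning down what the subspaces $\mathscr{H}_n$ are. I take $\mathscr{H}_n$ to be the closed subspace of $L^2\left(Sol_\sigma(X),\mathbb{P}\right)$ generated by the cylinder functions depending only on the coordinates $Z_0,\dots,Z_n$; equivalently, by the $L^2$-functions measurable with respect to $\sigma(Z_0,\dots,Z_n)$. The structural observation that drives every step is that on the solenoid (\ref{eq:s3}) the coordinate maps satisfy $Z_k=\sigma\circ Z_{k+1}$, and hence $Z_k=\sigma^{\,n-k}\circ Z_n$ for $0\le k\le n$. Consequently each level collapses to the functions of the single top coordinate, $\mathscr{H}_n=\overline{\{g\circ Z_n\mid g\in C(X)\}}$, and any cylinder function $(f_0\circ Z_0)\cdots(f_n\circ Z_n)$ equals $\left(\prod_{k=0}^n f_k\circ\sigma^{\,n-k}\right)\circ Z_n$, an element of $\mathscr{H}_n$.

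From this the first two axioms are immediate. For nesting, $g\circ Z_n=(g\circ\sigma)\circ Z_{n+1}\in\mathscr{H}_{n+1}$, so $\mathscr{H}_n\subseteq\mathscr{H}_{n+1}$; here $g\circ\sigma\in C(X)$ since $\sigma$ is continuous. For density of $\bigcup_n\mathscr{H}_n$, I use that the cylinder functions (\ref{eq:sp1}) are dense in $C\left(Sol_\sigma(X)\right)$ by Stone--Weierstrass (as recorded in the proof of \thmref{sol}), that each such cylinder function lies in some $\mathscr{H}_n$ by the collapse above, and that $C\left(Sol_\sigma(X)\right)$ is dense in $L^2(\mathbb{P})$ because $\mathbb{P}$ is a Radon probability measure on the compact space $Sol_\sigma(X)$.

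The third ingredient, unitarity of $U$, is supplied verbatim by part (\ref{enu:sol3}) of \thmref{sol}. It remains to check the covariance $U(\mathscr{H}_n)\subseteq\mathscr{H}_{n-1}$ for $n\ge1$, which is the computation at the heart of the statement. Using $\widetilde{\sigma}(x_0,x_1,\dots)=(\sigma(x_0),x_0,x_1,\dots)$ from (\ref{eq:s4}) one reads off $Z_n\circ\widetilde{\sigma}=Z_{n-1}$ for $n\ge1$, so that for $g\in C(X)$,
\[
U(g\circ Z_n)=\sqrt{W\circ Z_0}\,\big((g\circ Z_n)\circ\widetilde{\sigma}\big)=\sqrt{W\circ Z_0}\,(g\circ Z_{n-1}).
\]
The key point is that the weight $\sqrt{W\circ Z_0}$ is itself a function of $Z_{n-1}$: since $Z_0=\sigma^{\,n-1}\circ Z_{n-1}$ on the solenoid, one has $\sqrt{W\circ Z_0}=(\sqrt{W}\circ\sigma^{\,n-1})\circ Z_{n-1}$. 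Hence $U(g\circ Z_n)=\big[(\sqrt{W}\circ\sigma^{\,n-1})\,g\big]\circ Z_{n-1}\in\mathscr{H}_{n-1}$, and by linearity and continuity $U(\mathscr{H}_n)\subseteq\mathscr{H}_{n-1}$.

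The only genuinely delicate point is bookkeeping: one must confirm that the compositions with $\widetilde{\sigma}$ and the multiplier $\sqrt{W\circ Z_0}$ are legitimate $\mathbb{P}$-a.e.\ identities and that multiplication by this (possibly unbounded) weight keeps one inside $L^2(\mathbb{P})$. This last concern, however, is exactly what the unitarity of $U$ in \thmref{sol} already guarantees, so no new estimate is needed. I therefore expect the main obstacle to be nothing deeper than organizing the solenoid relations $Z_k=\sigma^{\,n-k}\circ Z_n$ and $Z_n\circ\widetilde{\sigma}=Z_{n-1}$ correctly, after which all four axioms fall out.
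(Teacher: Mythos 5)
Your proof is correct and follows essentially the same route as the paper: the same subspaces $\mathscr{H}_n=\overline{\mathrm{span}}\left\{ f\circ Z_{n}\mid f\in C\left(X\right)\right\}$ (your cylinder-function definition collapses to this via $Z_{k}=\sigma^{n-k}\circ Z_{n}$), the same nesting identity $f\circ Z_{n}=\left(f\circ\sigma\right)\circ Z_{n+1}$, and Stone--Weierstrass for density. You in fact supply the one computation the paper leaves unstated, namely that $Z_{n}\circ\widetilde{\sigma}=Z_{n-1}$ together with $Z_{0}=\sigma^{n-1}\circ Z_{n-1}$ gives $U\left(g\circ Z_{n}\right)=\left[\left(\sqrt{W}\circ\sigma^{n-1}\right)g\right]\circ Z_{n-1}\in\mathscr{H}_{n-1}$, which is precisely the asserted covariance (\ref{eq:mr3}).
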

\begin{proof}
As indicated above, the setting is specified in \thmref{sol}, and
we set 
\[
\mathscr{H}:=L^{2}\left(Sol_{\sigma}\left(X\right),\mathbb{P}\right);
\]
and, for each $n\in\mathbb{N}$, let $\mathscr{H}_{n}\subset\mathscr{H}$,
be the closed subspace spanned by
\begin{equation}
\left\{ f\circ Z_{n}\mid f\in C\left(X\right)\right\} .\label{eq:mr1}
\end{equation}
Since 
\begin{equation}
f\circ Z_{n}=\left(f\circ\sigma\right)\circ Z_{n+1}\label{eq:mr2}
\end{equation}
it follows that $\mathscr{H}_{n}\subseteq\mathscr{H}_{n+1}$. It further
follows from \thmref{sol} that $\cup_{n\in\mathbb{N}}\mathscr{H}_{n}$
is dense in $\mathscr{H}$. And, finally, the unitary operator $U$
from part (\ref{enu:sol3}) of \thmref{sol} satisfies
\begin{equation}
U\left(\mathscr{H}_{n}\right)\subset\mathscr{H}_{n-1},\quad\forall n\in\mathbb{N}.\label{eq:mr3}
\end{equation}
\end{proof}
\begin{cor}
Let $X,\sigma,R,h,\lambda,\mathbb{P}$ be as stated above; and let
$\left(\left(\mathscr{H}_{n}\right),U\right)$ be the corresponding
multi-resolution from \thmref{mr}. 

Then $\mathscr{H}_{0}\simeq L^{2}\left(X,h\,d\lambda\right)$, and
$\cap_{n\geq0}U^{n}\mathscr{H}_{m}=\mathscr{H}_{0}$ holds for all
$m\in\mathbb{N}$. Finally, $U$ restricts to a unitary operator in
$\mathscr{H}\ominus\mathscr{H}_{0}$; and the spectrum of this restriction
is pure Lebesgue spectrum, i.e., there is a Hilbert space $\mathscr{K}$
(the multiplicity space) such that $U\big|_{\mathscr{H}\ominus\mathscr{H}_{0}}$
is unitarily equivalent to a subshift of the bilateral shift $S$
in $L^{2}\left(\mathbb{T},\mbox{Leb};\mathscr{K}\right)$, where $\mathbb{T}$
is the circle group $\left\{ z\in\mathbb{C}\mid\left|z\right|=1\right\} $,
and the bilateral shift is then given on functions $\psi\in L^{2}\left(\mathbb{T},\mbox{Leb};\mathscr{K}\right)$
by 
\[
\left(S\psi\right)\left(z\right)=z\psi\left(z\right),\;\psi:\mathbb{T}\longrightarrow\mathscr{K},\;z\in\mathbb{T},\;\mbox{multiplication by \ensuremath{z}.}
\]
\end{cor}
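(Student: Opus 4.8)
The plan is to dispatch the three assertions in turn, reducing each to the explicit action of $U$ on the generating cylinder functions $f\circ Z_n$, followed by a Wold/Fourier description of a bilateral shift.

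\emph{Step 1: the identification $\mathscr{H}_0\simeq L^2(X,h\,d\lambda)$.} I would simply invoke the preceding corollary, which furnishes the map $V_0 g=g\circ Z_0$ (see (\ref{eq:cp-v0a})) and shows it is isometric from $L^2(X,h\,d\lambda)$ into $\mathscr{H}$. By definition the closed span $\mathscr{H}_0$ of $\{f\circ Z_0\mid f\in C(X)\}$ is exactly the range of $V_0$; since $C(X)$ is dense in $L^2(X,h\,d\lambda)$ and $V_0$ is isometric with closed range $\mathscr{H}_0$, it is a unitary of $L^2(X,h\,d\lambda)$ onto $\mathscr{H}_0$. (Concretely, the marginal law of $Z_0$ under $\mathbb{P}=\int\mathbb{P}_x\,d\lambda$ is $h\,d\lambda$ by \corref{cp} and (\ref{eq:s11}).) This gives the first assertion.

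\emph{Step 2: the level maps and the intersection.} The engine is the computation on $Sol_\sigma(X)$ that $Z_n\circ\widetilde{\sigma}=Z_{n-1}$ for $n\geq1$ while $Z_0=\sigma^{\,n-1}\circ Z_{n-1}$, whence for $f\in C(X)$
\[
U(f\circ Z_n)=\sqrt{W\circ Z_0}\,(f\circ Z_{n-1})=\bigl[\sqrt{W\circ\sigma^{\,n-1}}\,f\bigr]\circ Z_{n-1}\in\mathscr{H}_{n-1}.
\]
This reproves $U\mathscr{H}_n\subseteq\mathscr{H}_{n-1}$; moreover, since $W>0$ the multiplier $\sqrt{W\circ\sigma^{\,n-1}}$ is invertible, so $f\mapsto\sqrt{W\circ\sigma^{\,n-1}}f$ has dense range and one obtains the surjectivity $U\mathscr{H}_n=\mathscr{H}_{n-1}$ for every $n\geq1$. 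Iterating, $U^n\mathscr{H}_m=\mathscr{H}_{m-n}$ for $0\le n\le m$, so $\{U^n\mathscr{H}_m\}_{n\geq0}$ is a decreasing chain that reaches $\mathscr{H}_0$ at $n=m$; using the variant $Z_0\circ\widetilde{\sigma}=\sigma\circ Z_0$ one checks $U\mathscr{H}_0\subseteq\mathscr{H}_0$, so the chain cannot leave $\mathscr{H}_0$. Recognising $\mathscr{H}_0$ as the $U$-stationary terminal subspace then yields $\bigcap_{n\geq0}U^n\mathscr{H}_m=\mathscr{H}_0$, and the invariance $U\mathscr{H}_0\subseteq\mathscr{H}_0$ is what permits the restriction of $U$ to the complement $\mathscr{H}\ominus\mathscr{H}_0$.

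\emph{Step 3: pure Lebesgue spectrum.} I would pass to the wandering subspace $\mathscr{K}:=\mathscr{H}_1\ominus\mathscr{H}_0$ and its telescoping differences $\mathscr{W}_k:=\mathscr{H}_{k+1}\ominus\mathscr{H}_k$. Because $U\mathscr{H}_{k+1}=\mathscr{H}_k$ and $U$ is unitary, $U\mathscr{W}_k=\mathscr{W}_{k-1}$, so the translates $\{U^n\mathscr{K}\}_{n\in\mathbb{Z}}$ are mutually orthogonal; density of $\bigcup_m\mathscr{H}_m$ in $\mathscr{H}$ together with Step 2 gives $\mathscr{H}\ominus\mathscr{H}_0=\bigoplus_{k\geq0}\mathscr{W}_k$ and exhibits $U$ on the complement of the stationary core as a bilateral shift with multiplicity space $\mathscr{K}$. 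Finally the Fourier transform in the shift variable implements the standard unitary equivalence of a bilateral shift of multiplicity $\dim\mathscr{K}$ with multiplication by $z$ on $L^2(\mathbb{T},\mathrm{Leb};\mathscr{K})$; this is precisely the assertion that the spectrum is pure Lebesgue (a subshift of $S$, $S\psi(z)=z\psi(z)$, $\mathscr{K}=\mathscr{H}_1\ominus\mathscr{H}_0$).

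\emph{Main obstacle.} The algebra of the level maps in Steps 1--2 is routine; the substantive point is the spectral purity. One must control the tail of the nested family, i.e.\ the behaviour of the proper isometry $U|_{\mathscr{H}_0}$, and rule out any point or singular summand for $U$ on $\mathscr{H}\ominus\mathscr{H}_0$, equivalently show that the detail spaces $\{U^n\mathscr{K}\}$ genuinely exhaust the complement with no residual reducing part. I expect this to require, beyond $W>0$ and $h>0$, a mild exactness/mixing hypothesis on $(\sigma,\lambda,W)$ forcing triviality of the tail $\sigma$-algebra of the process $(Z_n)$; granting that, the shift is of uniform Lebesgue type and the unitary equivalence with multiplication by $z$ on $L^2(\mathbb{T};\mathscr{K})$ is complete.
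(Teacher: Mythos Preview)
Your approach is correct in spirit but differs substantially from the paper's. The paper's proof is a two-line appeal to black boxes: it invokes the Stone--von Neumann uniqueness theorem together with the Lax--Phillips spectral representation for multi-resolutions, citing \cite{MR2042745} and \cite{MR0220099}, and leaves all details to those references. You instead work everything out by hand: you identify $\mathscr{H}_0$ via the isometry $V_0$, compute the level-shift $U\mathscr{H}_n=\mathscr{H}_{n-1}$ explicitly from $Z_n\circ\widetilde{\sigma}=Z_{n-1}$, and then run a Wold-type argument with the wandering subspace $\mathscr{K}=\mathscr{H}_1\ominus\mathscr{H}_0$ to exhibit the bilateral shift structure and hence pure Lebesgue spectrum. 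Your route is more elementary and more transparent about what is actually being used; the paper's route is shorter but opaque, and the reader must reconstruct for themselves how Stone--von Neumann enters (presumably by encoding the pair ``multiplication on $\mathscr{H}_0$, translation by $U$'' as a Weyl-type system and invoking uniqueness to get the $L^2(\mathbb{T};\mathscr{K})$ model).

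One point deserves comment. You correctly flag in your ``Main obstacle'' paragraph that the delicate issue is the behaviour of $U$ on $\mathscr{H}_0$: for $U$ to restrict to a unitary on $\mathscr{H}\ominus\mathscr{H}_0$ one needs $\mathscr{H}_0$ to be \emph{reducing}, i.e.\ $U\mathscr{H}_0=\mathscr{H}_0$, whereas your Step~2 only gives the inclusion $U\mathscr{H}_0\subseteq\mathscr{H}_0$ (since $f\mapsto\sqrt{W}(f\circ\sigma)$ is in general a proper isometry on $L^2(X,h\,d\lambda)$). The same issue affects your intersection claim: once $n>m$ the chain $U^n\mathscr{H}_m$ equals $U^{n-m}\mathscr{H}_0$, which may be strictly smaller than $\mathscr{H}_0$. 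The paper does not address this either; its proof simply absorbs the point into the cited references. Your honesty about needing an exactness/tail-triviality hypothesis is in fact more careful than the paper's treatment.
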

\begin{proof}
The conclusion follows from an application of the Stone-von Neumann
uniqueness theorem \cite{MR2042745} combined with the present theorems
in Sections \ref{sec:L1} and \ref{sec:EA} above. (For more details
on the spectral representation for operators with multi-resolution,
see also \cite{MR0220099}.)
\end{proof}

\section{\label{sec:Harm}Harmonic Functions from Functional Measures}

Let $R:C\left(X\right)\longrightarrow\mathcal{M}\left(X\right)$ be
as specified in (\ref{def:R}); and let the measure system $\left\{ P\left(\cdot\cdot\mid x\right)\right\} _{x\in X}$
be as specified in (\ref{def:cm}). 

Let $\left(\Omega,\mathscr{F}\right)$ be a measure space; i.e., $\mathscr{F}$
is a specified sigma-algebra of events in a given sample space $\Omega$,
and let $Z_{0}$ be an $X$-valued random variable, i.e., it is assumed
that $Z^{-1}\left(A\right)\in\mathscr{F}$ for every Borel set $A\subset X$.
For recent applications, we refer to \cite{MR2966130,MR2966144,MR2966145,MR3286496,MR3370362}.
\begin{thm}
\label{thm:harm}Let $R,X,\Omega,\mathscr{F}$, and $Z_{0}$ be as
specified above. Suppose $\left\{ \mathbb{P}_{x}\right\} _{x\in X}$
is a system of positive measures on $\Omega$ indexed by $X$, and
set 
\begin{equation}
h\left(x\right)=\mathbb{P}_{x}\left(Z_{0}^{-1}\left(x\right)\right),\quad x\in X.\label{eq:h1}
\end{equation}
Assume that 
\begin{equation}
\int_{X}\mathbb{P}_{y}\left(\cdot\cdot\right)P\left(dy\mid x\right)=\mathbb{P}_{x}\left(\cdot\cdot\right),\label{eq:h2}
\end{equation}
then $h$ in (\ref{eq:h1}) is harmonic for $R$, i.e., we have 
\begin{equation}
R\left(h\right)=h,\;\mbox{pointwise on \ensuremath{X}.}\label{eq:h3}
\end{equation}
\end{thm}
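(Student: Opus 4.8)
The plan is to reduce the statement to the integral representation of $R$ and then feed the invariance hypothesis into it. First I would record that, by \corref{Rext}, the operator $R$ extends to measurable functions via $R(F)(x)=\int_X F(y)\,P(dy\mid x)$, so that the equation $R(h)=h$ is meaningful for the function $h$ of (\ref{eq:h1}), provided $h$ is itself measurable. Hence a preliminary step is to check that $x\mapsto h(x)=\mathbb{P}_x(Z_0^{-1}(x))$ is Borel: this I would extract from measurability of the family $x\mapsto\mathbb{P}_x$ together with measurability of the fibre assignment $x\mapsto Z_0^{-1}(x)$, which is available because $Z_0$ is assumed to be an $X$-valued random variable.

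The core computation is then the chain
\[
R(h)(x)=\int_X h(y)\,P(dy\mid x)=\int_X \mathbb{P}_y\!\left(Z_0^{-1}(y)\right)P(dy\mid x)\overset{(\ref{eq:h2})}{=}\mathbb{P}_x\!\left(Z_0^{-1}(x)\right)=h(x),
\]
in which the first equality is the extended definition of $R$, the second and fourth are the definition (\ref{eq:h1}) of $h$, and the middle one is the invariance (\ref{eq:h2}). Granting this chain, the theorem follows at once, pointwise in $x\in X$.

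The delicate point, and what I expect to be the main obstacle, is the middle equality. Condition (\ref{eq:h2}) is an identity of measures on $\Omega$: for each \emph{fixed} event $E\in\mathscr{F}$ one has $\int_X \mathbb{P}_y(E)\,P(dy\mid x)=\mathbb{P}_x(E)$. In the chain, however, the event $Z_0^{-1}(y)$ integrated against $P(dy\mid x)$ \emph{varies} with the integration variable $y$, so the step is not a literal instance of (\ref{eq:h2}) for a single $E$. To treat it honestly I would introduce the coupled positive measure $\Theta_x$ on $X\times\Omega$ determined by $\Theta_x(A\times E)=\int_A \mathbb{P}_y(E)\,P(dy\mid x)$, whose $\Omega$-marginal equals $\mathbb{P}_x$ by (\ref{eq:h2}). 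The two quantities to be matched are then the $\Theta_x$-measures of the graph set $\Delta=\{(y,\omega):Z_0(\omega)=y\}$ and of the slice $X\times Z_0^{-1}(x)$, and the invariance must be used precisely in the form that controls the mass of $\Theta_x$ along the diagonal fibre family $\{Z_0^{-1}(y)\}_{y\in X}$, not merely along one fibre.

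I would therefore read (\ref{eq:h2}) as asserting that the disintegration $y\mapsto\mathbb{P}_y$ is invariant under the one-step averaging $P(\cdot\mid x)$ in the fibre-wise sense needed to evaluate it along $y\mapsto Z_0^{-1}(y)$; under that reading a Fubini / monotone-class argument on $X\times\Omega$ collapses $\int_X \mathbb{P}_y(Z_0^{-1}(y))\,P(dy\mid x)$ to $\mathbb{P}_x(Z_0^{-1}(x))$ and closes the proof. The entire weight of the argument rests on this single diagonal evaluation; once it is justified, harmonicity $R(h)=h$ is immediate from the displayed chain.
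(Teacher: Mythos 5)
Your displayed chain is, word for word, the paper's entire proof: the authors pass from $\int_{X}\mathbb{P}_{y}\left(Z_{0}^{-1}\left(y\right)\right)P\left(dy\mid x\right)$ to $\mathbb{P}_{x}\left(Z_{0}^{-1}\left(x\right)\right)$ by citing (\ref{eq:h2}) directly, with no discussion of the diagonal issue, so your proposal takes essentially the same approach. The subtlety you flag is genuine --- (\ref{eq:h2}) as stated is an identity of measures for a \emph{fixed} event, not along the varying fibre $Z_{0}^{-1}(y)$ --- but in the intended application (\thmref{sol} and \corref{cp}) it dissolves because $\mathbb{P}_{y}$ is concentrated on $Z_{0}^{-1}(y)$, so that $h(y)=\mathbb{P}_{y}(\Omega)$ and the contested step is just (\ref{eq:h2}) evaluated at $E=\Omega$.
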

\begin{proof}
Using (\ref{eq:R}) in \defref{cm}, we get the following:
\begin{eqnarray*}
\left(Rh\right)\left(x\right) & = & \int_{X}h\left(y\right)P\left(dy\mid x\right)\\
 & \underset{\text{by \ensuremath{\left(\ref{eq:h1}\right)}}}{=} & \int_{X}\mathbb{P}_{y}\left(Z_{0}^{-1}\left(y\right)\right)P\left(dy\mid x\right)\\
 & \underset{\text{by \ensuremath{\left(\ref{eq:h2}\right)}}}{=} & \mathbb{P}_{x}\left(Z_{0}^{-1}\left(x\right)\right)=h\left(x\right),\quad x\in X.
\end{eqnarray*}
\end{proof}
\begin{cor}
Let $R$ be a transfer operator. Then if $\lambda\in\mathscr{L}_{1}\left(R\right)$,
then there is a solution $h\geq0$, on $X$, to $Rh=h$, and $\int_{X}h\left(x\right)d\lambda\left(x\right)=1$. \end{cor}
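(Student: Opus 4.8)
The plan is to deduce the corollary from \thmref{harm}: that theorem manufactures an $R$-harmonic $h$ out of a co-invariant family of positive measures $\left\{ \mathbb{P}_{x}\right\} _{x\in X}$, so the whole task is to produce such a family from the single datum $\lambda\in\mathscr{L}_{1}\left(R\right)$ and then to read off the normalization $\int_{X}h\,d\lambda=1$ from the fact that $\lambda$ is a probability measure. First I would fix the sample space to be the path space $\Omega=\prod_{0}^{\infty}X$ with its coordinate maps $Z_{n}$, exactly as in \secref{EA}, and take $Z_{0}$ as the initial random variable required by \thmref{harm}.

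Next I would build the family $\left\{ \mathbb{P}_{x}\right\} $ from the conditional kernels $P\left(\cdot\mid x\right)$ of \defref{cm} by a projective-limit (Kolmogorov / Ionescu--Tulcea) construction, so that under $\mathbb{P}_{x}$ the coordinate process starts at $Z_{0}=x$ and moves by the one-step law $P\left(dy\mid\cdot\right)$; concretely the finite-dimensional laws are the iterated integrals $\int_{A_{1}}\cdots\int_{A_{n}}P\left(dy_{n}\mid y_{n-1}\right)\cdots P\left(dy_{1}\mid x\right)$ already written down in formula (\ref{eq:cp5-1}). With this description the co-invariance hypothesis (\ref{eq:h2}) of \thmref{harm}, namely $\int_{X}\mathbb{P}_{y}\left(\cdot\right)P\left(dy\mid x\right)=\mathbb{P}_{x}\left(\cdot\right)$, is exactly the one-step backward relation built into these measures (compare \corref{cp}): conditioning on the first coordinate $y=Z_{1}$ and integrating against $P\left(dy\mid x\right)$ reproduces $\mathbb{P}_{x}$. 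I would then set $h\left(x\right)=\mathbb{P}_{x}\left(Z_{0}^{-1}\left(x\right)\right)$ as in (\ref{eq:h1}), so that \thmref{harm} delivers $R\left(h\right)=h$ pointwise and $h\geq0$.

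It remains to normalize, which is where $\lambda\in\mathscr{L}_{1}\left(R\right)$ enters: forming $\mathbb{P}=\int_{X}\mathbb{P}_{x}\,d\lambda\left(x\right)$ as in (\ref{eq:s11}), the total mass is $\mathbb{P}\left(\Omega\right)=\int_{X}\mathbb{P}_{x}\left(Z_{0}^{-1}\left(x\right)\right)d\lambda\left(x\right)=\int_{X}h\,d\lambda$, and choosing the overall scale of the construction so that $\mathbb{P}$ is a probability measure gives $\int_{X}h\,d\lambda=1$ at once. I expect the hard part to be the construction in the second step rather than this bookkeeping: because $R$ need not be conservative (one only has $R\left(\mathbbm{1}\right)=P\left(X\mid\cdot\right)$, not $=\mathbbm{1}$), the raw kernels $P\left(\cdot\mid x\right)$ are sub-probabilities and the naive projective limit leaks mass at every step. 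The real work is therefore a Doob-type renormalization of the kernels that restores consistency while preserving the co-invariance (\ref{eq:h2}); here the Radon--Nikodym derivative $W=d\left(\lambda R\right)/d\lambda$ furnished by $\lambda\in\mathscr{L}_{1}\left(R\right)$ is precisely the weight that controls the mass defect and certifies that the resulting $h$ is finite, nonnegative, $\lambda$-integrable, and normalized.
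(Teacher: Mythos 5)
Your overall strategy coincides with the paper's: both arguments obtain $h$ as $h\left(x\right)=\mathbb{P}_{x}\left(Z_{0}^{-1}\left(x\right)\right)$ for a family of path-space measures satisfying the co-invariance (\ref{eq:h2}), and then invoke \thmref{harm} together with \corref{cp} to conclude $R\left(h\right)=h$. The difference is that you have correctly isolated, and then left unfilled, the step on which everything turns: producing the family $\left\{ \mathbb{P}_{x}\right\} _{x\in X}$ from $\lambda$ alone. In the paper the measures $\mathbb{P}_{x}$ come from \thmref{sol}, whose functionals $L_{n}^{x}$ in (\ref{eq:sp2}) contain $h$ explicitly, and whose Kolmogorov consistency (\ref{eq:sp3}) is verified precisely by using $Rh=h$ from hypothesis (\ref{eq:s9}). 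You observe that without such an $h$ the raw kernels $P\left(\cdot\mid x\right)$ are only sub-probabilities and the naive projective limit is inconsistent; that observation is right, but your proposed repair --- a ``Doob-type renormalization of the kernels that restores consistency while preserving (\ref{eq:h2})'' --- is exactly the $h$-transform $P_{h}\left(dy\mid x\right)=h\left(y\right)P\left(dy\mid x\right)/h\left(x\right)$, which presupposes the harmonic function whose existence is the thing to be proved. As written, your argument is circular at its crux. (To be fair, the paper's own two-line proof shares this circularity: it says ``let $\left\{ \mathbb{P}_{x}\right\} $ be the system from \thmref{sol},'' and that system is only defined once an $h$ satisfying (\ref{eq:s9}) has been given.)

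There is also a secondary gap in your normalization step: rescaling so that $\mathbb{P}\left(\Omega\right)=1$ only works if $0<\int_{X}h\,d\lambda<\infty$, i.e., if the construction has already produced a nonzero, $\lambda$-integrable harmonic function; no choice of overall scale can rescue $h\equiv0$. A non-circular proof would have to supply an independent existence mechanism --- for instance a weak-$*$ compactness or fixed-point argument applied to the normalized iterates $R^{n}\left(\mathbbm{1}\right)$, or a (super)martingale-convergence construction of $h$ under a contractivity hypothesis such as $\left\Vert R\left(W\right)\right\Vert _{\infty}\leq1$ --- before the path-space machinery of \thmref{sol} and \thmref{harm} can be brought to bear. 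Neither your proposal nor the paper carries out such a step, so you should either add it or weaken the corollary to assume the existence of a nonzero fixed vector.
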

\begin{proof}
This is a conclusion of \corref{cp} and \thmref{harm}. Indeed, given
$\lambda\in\mathscr{L}_{1}\left(X\right)$, let $\left\{ \mathbb{P}_{x}\right\} _{x\in X}$
be the system from \thmref{sol}, then $h\left(x\right)=\mathbb{P}_{x}\left(Z_{0}^{-1}\left(x\right)\right)$
is the desired solution. \end{proof}
\begin{acknowledgement*}
The co-authors thank the following colleagues for helpful and enlightening
discussions: Professors Sergii Bezuglyi, Ilwoo Cho, Paul Muhly, Myung-Sin
Song, Wayne Polyzou, and members in the Math Physics seminar at The
University of Iowa.

\bibliographystyle{amsalpha}
\bibliography{ref}
\end{acknowledgement*}

\end{document}